\theoremstyle{plain}
\newtheorem{theorem}{Theorem}
\numberwithin{theorem}{section}
\numberwithin{figure}{section}
\numberwithin{table}{section}
\newtheorem{lemma}[theorem]{Lemma}
\newtheorem{corollary}[theorem]{Corollary}
\theoremstyle{definition}
\newtheorem{definition}[theorem]{Definition}
\theoremstyle{remark}
\tikzstyle{every node} = [draw, circle, fill = black, minimum size = 4pt, inner sep = 0pt]
\tikzstyle{normal} = [draw=none, fill = none]
\tikzstyle{notestyle} = [rectangle,
\title{Certifying coloring algorithms for graphs without long induced paths\thanks{MK was partially supported by the National Science Centre of Poland under grant number 2013/09/B/ST6/03136 and AP was partially supported by the National Science Centre of Poland under grant number 2012/07/D/ST6/02432.}}
\author{Marcin Kamiński\thanks{Corresponding author: {\tt mjk@mimuw.edu.pl}} \and Anna Pstrucha\thanks{The work was done when the author was a student at University of Warsaw.}}
\date{\small Institute of Computer Science\\University of Warsaw, Poland}
\begin{document}

\maketitle

\begin{abstract}
Let $P_k$ be a path, $C_k$ a cycle on $k$ vertices, and $K_{k,k}$ a complete bipartite graph with $k$ vertices on each side of the bipartition. We prove that (1) for any integers $k, t>0$ and a graph $H$ there are finitely many subgraph minimal graphs with no induced $P_k$ and $K_{t,t}$ that are not $H$-colorable and (2) for any integer $k>4$ there are finitely many subgraph minimal graphs  with no induced $P_k$ that are not $C_{k-2}$-colorable. 

The former generalizes the result of Hell and Huang [\emph{Complexity of coloring graphs without paths and cycles},  Discrete Appl. Math. 216: 211--232 (2017)] and the latter extends a result of  Bruce, Ho\`ang, and Sawada [\emph{A certifying algorithm for 3-colorability of $P_5$-Free Graphs}, ISAAC 2009: 594--604]. Both our results lead to polynomial-time certifying algorithms for the corresponding coloring problems.
\end{abstract}

\begin{section}{Introduction}

We consider finite graphs without loops or multiple edges. We use standard notation and refer the reader to \cite{diestel} for the notions not defined here. 

\begin{paragraph}{Graph coloring.}

Let $H$ be a fixed graph. An \textit{$H$-coloring} of a graph $G$ is a mapping from the vertex set $V(G)$ of $G$ to the vertex set of $H$ (colors) with the restriction that adjacent vertices of $G$ are mapped to adjacent colors. When $H$ is the complete graph on $c$ vertices, $H$-coloring corresponds to the standard $c$-coloring. We will also be interested in the algorithmic problem of finding an $H$-coloring. This problem is known to be solvable in polynomial time when $H$ is bipartite and NP-complete otherwise \cite{H-coloring}.

In a generalization of $H$-coloring every vertex of $G$ comes with a list of admissible colors. An \emph{$H$-list-coloring} of $G$ is an $H$-coloring of $G$ such that every vertex of $G$ receives a color that is on its list. When $H$ is the complete graph on $c$ vertices, the problem again specializes to the standard $c$-list-coloring. 

Coloring problems are notoriously NP-complete but for often admit poly\-no\-mial-time algorithms when structural restrictions are placed on the input graph. These are usually phrased in terms of forbidden (induced) subgraphs. For two graphs $H$ and $G$, we write $H \leq G$ when $H$ is an induced subgraph of a graph $G$ and $H \subseteq G$ when $H$ is a subgraph of $G$. We will say that a graph $G$ is \emph{$H$-free} if $H$ is not an induced subgraph of $G$ and that $G$ is $H$-sbgr-free if $H$ is not a subgraph of $G$. 

For which integers $c\geq3$ and graphs $H$ there exist polynomial-time algorithms for the problem of $c$-coloring $H$-free graphs? Kamiński and Lozin in \cite{cycleNP} proved that for any $c \geq 3$, $c$-coloring is NP-complete in the class of $H$-free graphs whenever $H$ contains a cycle. If $H$ is a forest and has a vertex of degree at least 3, then $c$-coloring $H$-free graphs ($c \geq 3$) is NP-complete due to results of Hoyler \cite{3edgecoloring}. Thus polynomial-time algorithms are only possible when $H$ is a vertex-disjoint union of paths and a natural starting point is the case of one path. 

Let $P_k$ denote a path on $k$ vertices.
The first results on coloring $P_k$-free graphs come from  Woeginger and Sgall \cite{longpaths}. They were strengthen by Huang who proved that 5-coloring $P_6$-free graphs and 4-coloring $P_7$-free graphs are both NP-complete \cite{pathNP}. Ho\`ang et al. proved that $c$-coloring and $c$-list-coloring (for any $c>0$) are solvable in polynomial time for $P_5$-free graphs \cite{polyP5}. Chudnovsky et al. proved that 3-coloring is solvable in polynomial time for $P_7$-free graphs \cite{3col}. Two interesting problems that still remain open are the complexity of 4-coloring $P_6$-free graphs and whether $3$-coloring $P_k$-free graphs is NP-complete for some $k>7$? 

Regarding list coloring, Broersma et al. proved that 3-list-coloring is solvable in polynomial time in $P_6$-free graphs \cite{3list} and Golovach et al. showed that 4-list-coloring is NP-complete for $P_6$-free graphs \cite{listP6}. We refer the reader to \cite{survey} for a comprehensive survey of the area. 
\end{paragraph}

\begin{paragraph}{Certifying algorithms.}

An algorithm is \emph{certifying} if together with each output it also return a \emph{witness}, a simple and easily verifiable
certificate that the particular output is correct. A typical example is an algorithm checking whether a graph is bipartite -- it either returns a bipartition or an odd cycle. In any case, there is a certificate of the output. The notion of certifying algorithm was introduced by Kratsch et al. in \cite{certif} and we refer the reader to the survey \cite{certifying} for more details.

A certifying coloring algorithm would return a desired coloring if one exists and an appropriate witness in case of failure. A subgraph (or an induced subgraph) of the input graph that does not have the desired coloring and is minimal with respect to this property is often a good choice of a witness. (As the odd cycle in the case of 2-coloring.) If there are only finitely such many minimal graphs with respect to that coloring, the requirement that the witness is easily verifiable is certainly satisfied. 

If there exits a polynomial-time algorithm for a given coloring problem and there are also only finitely many minimal graphs not admitting this type of coloring, there exists a polynomial-time certifying algorithm for this problem. If additionally, we know all these minimal graphs (or can at least provide a bound on their size), we can actually construct the certifying algorithm.

There is a recent interest in certifying algorithms for coloring problems in graphs without long induced paths. We call a graph $G$ \textit{c-critical} if it cannot be colored with $c-1$ colors but every proper subgraph of $G$ can. Bruce et al. showed that there are six 4-critical $P_5$-free graphs \cite{bruce} and Chudnovsky at el. showed that there are twenty four 4-critical $P_6$-free graphs \cite{4critP6}. The former result together with \cite{polyP5} and the latter together with \cite{3col} imply that there exist certifying algorithms for the corresponding coloring problems. Hell and Huang proved that the set of $c$-critical $(P_6,C_4)$-free graphs is finite for any $c$ \cite{c4}. Their result together with a coloring algorithm of \cite{GPS} provides a certifying algorithm. 

On the other hand, Ho\`ang et al. proved that the set of $c$-critical $P_5$-free graphs is infinite for $c \geq 5$ \cite{p5infCert} and Chudnovsky et al. in fact showed that for a connected $H$ there are finitely many $4$-critical $H$-free graphs if and only if $H$ is a subgraph of $P_6$.
\end{paragraph}

\begin{paragraph}{Our contribution.}

Our contribution is twofold. First, we prove that for any integers $k, t>0$ and a graph $H$ there exists a finite number of subgraph minimal $(P_k,K_{t,t})$-free graphs that are not $H$-list-colorable. This implies the existence of a polynomial-time certifying algorithm for this coloring problem. Consequently, for any integer $c>0$, there are only finitely many $c$-critical $(P_k, K_{t,t})$-free graphs. This generalizes the result of Hell and Huang on $c$-coloring $(P_6,C_4)$-free graphs \cite{c4}, where $C_4 = K_{2,2}$.

Second, we prove that for any integer $k>4$ all subgraph minimal not $C_{k-2}$-colorable graphs have at most $3k+28$ vertices. We also give a certifying algorithm for this coloring problem. This extends a result of Bruce et al. who showed that there are six 4-critical graphs when $k=5$ \cite{bruce}.
\end{paragraph}
\end{section}

\begin{section}{Coloring $(P_k,K_{t,t})$-free graphs}

We will need some definitions. 

\begin{definition}[$r$-minimal]
Let $r$ be a binary relation and $P$ a property. Graph $G$ is \emph{$r$-minimal} for relation $r$ with respect to $P$ iff
there is no graph $G'$ smaller than $G$ with respect to $r$ and having property $P$.
\end{definition}

\begin{definition}[Quasi-ordering]
A pair $(Q, \leq)$ is a \emph{quasi-ordering} if $\leq$ is reflexive and transitive.
\end{definition}

\begin{definition}[Well-quasi-ordering]
A \emph{well-quasi-ordering} (\emph{wqo}) is a quasi-ordering that is well-founded, which means that
any infinite sequence of elements $q_0,q_1,q_2,\ldots$ from $Q$ contains
a pair $q_i \leq q_j$ for some $i<j$.
\end{definition}
 
\begin{definition}[Labeled graph]
Let $(Q, \leq)$ be a quasi-ordering, $G$ be a graph, and let $f$ be a mapping
from $V(G)$ to $Q$. We call the pair $(G,f)$ \emph{$Q$-labeled graph}.
\end{definition}

If $C$ is a class of graphs,
we denote by $C(Q)$ the class of $Q$-labeled graphs $(G,f)$ such that $G \in C$.
We introduce ordering of labeled graphs.
If $G \leq G'$ then there exists a function $\sigma : V(G) \rightarrow V(G')$
such that $(u,v) \in E(G) \iff (\sigma(u), \sigma(v)) \in E(G')$.
If $G \subseteq G'$ then there exists a function $\sigma : V(G) \rightarrow V(G')$
such that $(u,v) \in E(G) \implies (\sigma(u), \sigma(v)) \in E(G')$.

\begin{definition}[Ordering of labeled graph]
For any two members $(G,f)$ and $(G',f')$ of $C(Q)$, we define $(G,f) \leq_l (G',f')$ if
$G \leq G'$ and $f(v) \leq f'(\sigma(v))$ for all $v \in V(G)$.
Similarly $(G,f) \subseteq_l (G',f')$ if
$G \subseteq G'$ and $f(v) \leq f'(\sigma(v))$ for all $v \in V(G)$.
\end{definition}

$P(S)$ denotes the power set of $S$. We are now ready to state and prove our results. 

\begin{theorem}
\label{main}
Let $H$ be a graph and $k,t>0$ integers. Let $C$ be a class of all $(P_k,K_{t,t})$-free graphs.
Let $(Q,\leq)$ be a quasi-ordering where $Q \subseteq P(V(H))$ and $A \leq B$ iff $B$ is a subset of $A$.
There is a finite number of $\leq_l$-minimal ($\subseteq_l$-minimal) not $H$-list-colorable graphs in $C(Q)$.
\end{theorem}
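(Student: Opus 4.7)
The plan is to reduce the statement to a well-quasi-ordering claim on $C(Q)$ itself. First, observe that being \emph{not} $H$-list-colorable is upward-closed under both $\leq_l$ and $\subseteq_l$: if $(G,f)\leq_l (G',f')$ via an embedding $\sigma$ and $c'$ is a proper list-coloring of $(G',f')$, then the pullback $c'\circ\sigma$ is a proper list-coloring of $(G,f)$, since $\sigma$ preserves adjacencies and $f'(\sigma(v))\subseteq f(v)$; the same argument works for $\subseteq_l$ because then only the adjacencies already present in $G$ need to be respected. Consequently, if $(C(Q),\leq_l)$ is a wqo then its minimal not-$H$-list-colorable members form an antichain and are therefore finite; and since $\leq_l$ refines $\subseteq_l$, any wqo in $\leq_l$ immediately yields a wqo in $\subseteq_l$, so both halves of the statement are handled uniformly.

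The remaining task is to show that $(C(Q),\leq_l)$ is a wqo. The label side is automatic: because $V(H)$ is finite, the alphabet $Q\subseteq P(V(H))$ is finite, and a finite quasi-ordering is trivially wqo. The substantive content is a structural wqo result for $(P_k,K_{t,t})$-free graphs labeled from a finite wqo. I would proceed by induction on $k$; the base cases are easy because for small $k$ the $(P_k,K_{t,t})$-free graphs split into connected components of bounded size, to which Higman's lemma on multisets of labeled pieces applies directly. For the inductive step, the key ingredients are: (i) excluding $K_{t,t}$ as a subgraph caps the clique number at $2t-1$ and bounds the edge density, so no ``wide'' biclique-like pattern can appear; (ii) excluding $P_k$ as an induced subgraph yields a dominating subgraph of bounded size in every connected member, via standard $P_k$-free domination arguments; (iii) each vertex outside the dominating set falls into one of finitely many ``types'' determined by its label and by its attachment pattern to the dominating set. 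Grouping vertices by type produces a decomposition whose pieces are strictly simpler for the induction (e.g.\ $(P_{k'},K_{t,t})$-free for some $k'<k$, or otherwise of bounded size), and Higman's lemma applied to the resulting sequence of typed pieces, combined with the inductive hypothesis, closes the argument.

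The main obstacle is steps (ii) and (iii): producing a bounded-size dominating structure whose removal leaves pieces that are both strictly simpler in the induction parameter and whose attachment types form a finite alphabet. $(P_k,K_{t,t})$-free graphs do not in general have bounded clique-width or tree-width, so no off-the-shelf wqo theorem for such classes can be invoked; the Ramsey-style extraction has to exploit the simultaneous forbidding of a long induced path and of a complete bipartite subgraph, and must be robust enough to propagate through the induction on~$k$. Once this decomposition is in place, the wqo claim, the upward-closedness observation, and the standard passage from wqo to finitely many minimal elements combine routinely to deliver the theorem.
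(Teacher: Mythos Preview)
Your plan has a genuine gap: the goal you set yourself --- that $(C(Q),\leq_l)$ is a wqo --- is false, so the reduction you describe cannot be completed. Take $k\geq 5$, $t\geq 2$ and consider, for each $n\geq 3$, the split graph $S_n$ with clique $\{a_1,\dots,a_n\}$, independent set $\{b_1,\dots,b_n\}$, and $b_j$ adjacent exactly to $a_j$ and $a_{j+1}$ (indices mod $n$). Every split graph is $2K_2$-free, hence $P_5$-free, and it is $C_4$-free, hence $K_{2,2}$-free; so all $S_n$ lie in $C$. A short argument shows that $S_m\leq S_n$ forces $m=n$: any induced copy must use $m$ clique vertices and $m$ independent vertices, each chosen $b_j$ must keep both of its neighbours $a_j,a_{j+1}$, and $2$-regularity of the bipartite part then propagates to force all indices to be chosen. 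Thus $\{S_n\}$ is an infinite antichain in $C$ (and in $C(Q)$ with constant labels), so $(C(Q),\leq_l)$ is not wqo.

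The specific misstep is your item (i): you write that ``excluding $K_{t,t}$ as a subgraph caps the clique number at $2t-1$'', but in the theorem $K_{t,t}$ is excluded only as an \emph{induced} subgraph (that is what ``$H$-free'' means in the paper), and this does not bound $\omega$ at all --- complete graphs contain no induced $K_{2,2}$. Without a clique bound, the decomposition you sketch in (ii)--(iii) has no chance: the attachment types to a bounded dominating set are governed by subsets of that set together with the internal structure on them, and with unbounded cliques you cannot force the pieces into a strictly simpler class.

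The paper's proof avoids this by \emph{not} trying to wqo all of $C(Q)$. It first observes that an $H$-list-colorable graph satisfies $\omega(G)\leq\omega(H)$; hence colorable members of $C$ are simultaneously $K_{\omega(H)+1}$-free, $K_{t,t}$-free and $P_k$-free. The Ramsey-type lemma of Atminas--Lozin--Razgon then gives a bound $l$ so that all such graphs are $P_l$-\emph{subgraph}-free. A $\leq_l$-minimal non-colorable graph becomes colorable after deleting one vertex, so it is $P_{2l}$-subgraph-free. Now Ding's theorem (labelled $P_{l'}$-subgraph-free graphs are wqo under $\leq_l$) applies to this restricted class, the minimal graphs form an antichain there, and finiteness follows; the $\subseteq_l$ case is then immediate since every $\subseteq_l$-minimal graph is already $\leq_l$-minimal. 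The crucial idea you are missing is that the clique bound comes from the \emph{coloring target} $H$, not from the excluded $K_{t,t}$.
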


It is easy to see that the theorem easily specializes to many types of graph coloring: $c$-coloring, $H$-coloring, $c$-list-coloring, or $H$-list-coloring. We will state our algorithmic result for these four problems. 

\begin{corollary}
\label{algo}
Let $H$ be a graph and $k,t>0$ integers. There exists a polynomial certifying algorithm deciding $c$-colorability
($H$-colorability, $c$-list-colorability, $H$-list-colorability) of a $(P_k, K_{t,t})$-free graph.
\end{corollary}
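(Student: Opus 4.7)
The plan is to derive Corollary \ref{algo} directly from Theorem \ref{main}, combining the finite bound on obstruction size with standard self-reduction. Since $c$-colorability, $H$-colorability, and $c$-list-colorability are special cases of $H$-list-colorability (take $H = K_c$ and/or the uniform list $V(H)$), it suffices to produce a polynomial certifying algorithm for $H$-list-colorability of graphs in $C$.

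First I apply Theorem \ref{main} with $Q = P(V(H))$ to obtain a finite set $\mathcal{M}$ of $\subseteq_l$-minimal non-$H$-list-colorable labeled graphs in $C(Q)$. Finiteness yields a constant $N = N(H,k,t)$ bounding the vertex count of every $M \in \mathcal{M}$. The key claim is that $(G,f) \in C(Q)$ fails to be $H$-list-colorable if and only if some $M \in \mathcal{M}$ satisfies $M \subseteq_l (G,f)$. For the ``if'' direction, any list-coloring $\phi$ of $(G,f)$ composes with the embedding $\sigma : V(M) \to V(G)$ to yield a valid list-coloring of $M$ (since $f_M(v) \supseteq f_G(\sigma(v)) \ni \phi(\sigma(v))$ and adjacency is preserved), contradicting $M$'s non-colorability. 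Conversely, the set $S$ of elements of $C(Q)$ that are $\subseteq_l$-below $(G,f)$ and not list-colorable is nonempty and admits only finite descending chains, hence has a $\subseteq_l$-minimal element; by transitivity of $\subseteq_l$ this element is globally $\subseteq_l$-minimal among non-colorable members of $C(Q)$ and so lies in $\mathcal{M}$, witnessing a small obstruction.

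The decision step of the algorithm then enumerates candidate witnesses of bounded size: iterate over every $V' \subseteq V(G)$ with $|V'| \leq N$, every edge set $E' \subseteq E(G[V'])$, and every relabeling $f' : V' \to P(V(H))$ with $f(v) \subseteq f'(v)$, and test each labeled graph $(V',E',f')$ for $H$-list-colorability by exhausting its at most $|V(H)|^N$ color assignments. The number of candidates is $O(n^N)$ times a constant depending only on $H$ and $N$, so the decision runs in polynomial time. If a non-colorable candidate is found, return it as the ``no''-certificate, which is verified in constant time by the same brute-force enumeration of colorings.

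For the ``yes'' case, produce an explicit coloring via self-reduction on the decision oracle: process $V(G)$ in any order, and for each vertex $v$ try the colors of $f(v)$ one by one, temporarily replacing $f(v)$ by $\{c\}$ and rerunning the decision procedure; commit to the first $c$ for which the instance remains list-colorable. Such a $c$ exists at every step because the original instance was list-colorable, the final assignment is a valid $H$-list-coloring of $(G,f)$ and is checked in $O(|V(G)|+|E(G)|)$ time, and the self-reduction uses only $O(|V(G)|\cdot|V(H)|)$ oracle calls. The main difficulty is entirely encapsulated in Theorem \ref{main}; once the finite obstruction bound is available, the certifying algorithm reduces to bounded-size search plus textbook self-reduction.
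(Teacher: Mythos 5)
Your proof is correct, and its decision-plus-witness half (enumerate all labelled substructures on at most $N$ vertices, where $N$ bounds the finitely many $\subseteq_l$-minimal obstructions from Theorem~\ref{main}, and brute-force test each) is essentially what the paper relies on implicitly for producing the ``no''-certificate. Where you genuinely diverge is in producing the coloring in the ``yes'' case. The paper does not self-reduce: it observes (Lemma~\ref{l}, via Lemma~\ref{ramsey}) that every $H$-list-colorable $(P_k,K_{t,t})$-free graph excludes $P_l$ as a subgraph for a constant $l=l(H,k,t)$, hence excludes a planar minor and has bounded treewidth by Robertson and Seymour, and then colors by dynamic programming over a tree decomposition. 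You instead extract a polynomial-time decision oracle from the finite obstruction set alone and recover the coloring by the standard one-color-at-a-time self-reduction; this is sound because restricting $f(v)$ to a singleton keeps the instance inside $C(Q)$ for $Q=P(V(H))$, so the same obstruction bound $N$ applies at every stage. The trade-offs: your argument is more self-contained, needing nothing beyond Theorem~\ref{main}, whereas the paper's needs the additional structural input of the Ramsey-type lemma and the excluded-planar-minor theorem; on the other hand, the paper's coloring phase has running time governed by the treewidth bound, while each of your oracle calls costs $O(n^N)$ with $N$ the non-explicit obstruction size, so the exponent of your polynomial is a non-constructive constant. (Two small remarks: your enumeration over edge subsets and enlarged lists is harmless but unnecessary --- induced subgraphs on at most $N$ vertices with the inherited lists already suffice, since any obstruction $\subseteq_l$-below $(G,f)$ forces such an induced substructure to be non-colorable; and both your version and the paper's share the caveat, already noted in the introduction, that the algorithm is shown to exist rather than explicitly constructed, because the wqo argument gives no effective bound on $N$.)
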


\noindent \emph{Sketch of proof}. To prove the corollary we have to give a polynomial-time coloring algorithm when the input graph is $c$-colorable
($H$-colorability, $c$-list-colorability, $H$-list-colorability). Let $H$, $k$, and $t$ be like in the statement of the Corollary. We will see in the next section (Lemma \ref{l}) that $H$-list-colorable $(P_k, K_{t,t})$-free graphs are $P_l$-sbgr-free, where $l$ is a constant depending on $k$, $t$, and $H$. Excluding $P_l$ as a subgraph is equivalent to excluding $P_l$ as a minor. Robertson and Seymour proved that graphs excluding a planar minor have bounded treewitdh \cite{planar-minor}. Hence, $H$-list-colorable $(P_k, K_{t,t})$ graphs have bounded treewidth and can be colored in any of the coloring models by designing a dynamic programming over a tree decomposition \cite{bounded-twd-Hcol}.

\begin{subsection}{Proof of Theorem \ref{main}}

In this proof we will shorten $H$-list-colorable to just colorable.
Denote the class of all colorable graphs in $C(Q)$ by $S$. Notice that if
$(G,f) \in S$ and $(G',f') \leq_l (G,f)$ or $(G',f') \subseteq_l (G,f)$ then $(G',f') \in S$.
Denote the set of all $\leq_l$-minimal not colorable $Q$-labeled graphs in $C$ by $A$.
Denote the set of all $\subseteq_l$-minimal not colorable $Q$-labeled graphs in $C$ by $A'$.
We want to prove that $A$ and $A'$ are finite.

We need to use the following two lemmas from literature. The first lemma was proved by Ding in \cite{wqo} and the second by Atminas et al. in \cite{ramsey}

\begin{lemma}
\label{wqo}
Let $X$ be a class of all $P_k$-sbgr-free graphs and let $Y$ be a quasi-ordering. If $(Y, \leq)$ is
wqo then $(X(Y), \leq_l)$ is wqo.
\end{lemma}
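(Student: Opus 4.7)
The plan is to reduce the problem to Kruskal's tree theorem for labeled rooted forests of bounded depth. The first observation to establish is structural: because $G \in X$ has no $P_k$ subgraph, any depth-first search tree of $G$ has depth at most $k-1$ (its longest root-to-leaf branch is itself a path in $G$), and on an undirected graph every non-tree DFS edge joins an ancestor to a descendant. Consequently, every $G \in X$ admits an elimination forest $F(G)$ of depth $d \le k-1$ in which every edge of $G$ joins a vertex to one of its ancestors. Equivalently, $P_k$-subgraph-free graphs have tree-depth at most $k-1$.

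Given $(G, f) \in X(Y)$ together with such a forest $F(G)$, I would enrich the label of every vertex $v$ to a tuple $\tilde f(v) = (f(v),\, a_1(v),\, \ldots,\, a_d(v))$, where $a_\ell(v) \in \{0, 1, \star\}$ records whether $v$ has an ancestor at depth $\ell$ in $F(G)$ and, if so, whether it is joined to $v$ by an edge of $G$. The enriched alphabet $Y \times \{0, 1, \star\}^d$ is wqo as a finite product of a wqo with a finite set. I would then apply the bounded-depth refinement of Kruskal's tree theorem: rooted forests of depth $\le d$ whose vertices carry labels in a wqo are themselves wqo under depth-preserving, label-respecting topological embeddings. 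Applied to an infinite sequence $(G_n, f_n)_{n\ge 0}$ in $X(Y)$, this produces indices $i < j$ and an embedding $\sigma: V(F(G_i)) \to V(F(G_j))$ preserving the ancestor relation, depths, and augmented labels.

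The final task is to check that $\sigma$ realises $(G_i, f_i) \le_l (G_j, f_j)$. The first coordinate of the enriched label gives $f_i(v) \le f_j(\sigma(v))$. For the induced-subgraph condition, every edge or non-edge of $G_i$ between a vertex $v$ and its depth-$\ell$ ancestor is encoded in the bit $a_\ell(v)$; since $\sigma$ preserves depths and ancestor chains, the corresponding pair at $\sigma(v)$ and its depth-$\ell$ ancestor in $F(G_j)$ carries the same bit, so edges map to edges and non-edges to non-edges, yielding the biconditional demanded by $\le$. The $\subseteq_l$ variant of the lemma follows immediately by retaining only the "edge $\Rightarrow$ edge" direction of the same bit comparison.

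The main obstacle will be the interplay between Kruskal's theorem and the adjacency bits. A plain topological tree embedding need not preserve depth, in which case an edge at depth $\ell$ in $G_i$ could be matched to an ancestor at a different depth in $G_j$ whose recorded bit happens to be $0$, breaking the translation step. I would sidestep this either by invoking the depth-preserving (bounded-depth) variant of Kruskal, where such shifts are forbidden outright, or equivalently by folding the depth of each vertex into its label so that label preservation alone forces depth preservation. Establishing this refinement cleanly, together with verifying that the DFS-based elimination forest used in Step 1 can be chosen canonically enough to make the whole argument go through for every graph in the sequence, is where the bulk of the technical work will lie.
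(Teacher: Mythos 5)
The paper does not prove this lemma at all --- it is imported as a black box from Ding \cite{wqo} --- so there is no in-paper argument to compare against; what you have written is a genuine, self-contained proof, and it is sound. It is essentially the standard modern argument: the DFS observation is exactly right ($P_k$-subgraph-free graphs have tree-depth at most $k-1$, and all non-tree edges of a DFS forest of an undirected graph are back edges, so every edge of $G$ is vertical in $F(G)$), and enriching each vertex label with the vector of adjacency bits to its ancestors reduces the statement to the well-quasi-ordering of bounded-depth labeled forests over the wqo alphabet $Y\times\{0,1,\star\}^{d}$. Three points to tighten when you write it out. First, you do not need Kruskal's theorem: induct on the depth $d$, viewing a depth-$d$ forest as a finite multiset of pairs (root label, child forest of depth $d-1$) and applying Higman's lemma at each level; the embedding this produces is automatically \emph{rigid} (roots to roots, children to children of the image), hence preserves depth and sends the depth-$\ell$ ancestor of $v$ to the depth-$\ell$ ancestor of $\sigma(v)$, which dissolves the depth-preservation worry in your last paragraph. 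Second, you check the biconditional $uv\in E(G_i)\iff\sigma(u)\sigma(v)\in E(G_j)$ only for ancestor--descendant pairs; you must also note that incomparable pairs of $F(G_i)$ map to incomparable pairs of $F(G_j)$ (the rigid embedding sends distinct subtrees into distinct subtrees), since only then are the remaining non-edges of $G_i$ matched to non-edges of $G_j$ --- here you use again that every edge of $G_j$ is vertical in $F(G_j)$. Third, no canonicity of the DFS forests is needed: fix an arbitrary elimination forest for each graph in the infinite sequence; well-quasi-ordering only requires one increasing pair, so the choice is immaterial.
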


\begin{lemma}
\label{ramsey}
For every $t$, $q$ and $s$, there is a number $z = Z(s, t, q)$ such that every
graph with a path of length at least $z$ contains either $K_t$ or $K_{q,q}$ or $P_s$ as induced subgraph.
\end{lemma}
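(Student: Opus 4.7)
The plan is to prove the lemma by iterating the classical Ramsey theorem along the linear order provided by the given path. Suppose $G$ contains neither $K_t$ nor $K_{q,q}$ as induced subgraphs and fix a path $P=v_1v_2\cdots v_z$ in $G$ with $z$ large (to be chosen); I aim to produce an induced $P_s$ in $G$.

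First, I would apply Ramsey's theorem to the vertex set $V(P)$ with unordered pairs $\{v_i,v_j\}$ $2$-coloured by adjacency in $G$: since $G$ is $K_t$-free, for $z\geq R(t,m_1)$ I obtain an independent set $I=\{v_{i_1},\dots,v_{i_{m_1}}\}$ of path-vertices (with $i_1<\cdots<i_{m_1}$) of any desired size~$m_1$. Because consecutive elements of $I$ are non-adjacent yet both lie on $P$, each gap $i_{k+1}-i_k$ is at least $2$, so from each gap I may pick a witness $w_k=v_{i_k+1}$. A second Ramsey layer applied to the $w_k$ gives either $K_t$ (done) or a large sub-selection on which the $w_k$ are pairwise non-adjacent; a third Ramsey layer on the gap-lengths lets me restrict to a sub-selection on which all gaps have a common value.

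Next I turn to the bipartite adjacencies between the surviving $v_{i_k}$'s and $w_k$'s. A final Ramsey extraction on these cross-pairs produces either an induced $K_{q,q}$ --- contradicting the hypothesis --- or a sub-selection on which all long-range cross-adjacencies $v_{i_k}w_l$ with $|k-l|\geq 2$ are non-edges. In the bucket ``gap equals $2$'' each $w_k$ is simultaneously adjacent to $v_{i_k}$ and $v_{i_{k+1}}$, so the alternating sequence $v_{i_1},w_1,v_{i_2},w_2,\dots$ is already a path in $G$ whose only potential chords are exactly the long-range cross-adjacencies I just eliminated; hence this sequence, of length at least $s$ once $m_1$ is large enough, is an induced $P_s$. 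For larger gap buckets a symmetric argument with two witnesses per gap (one near each end of each gap) reduces to the same situation.

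The main obstacle is the simultaneous control of four types of adjacency --- within $I$, within $\{w_k\}$, local cross-adjacencies (which must be edges), and long-range cross-adjacencies (which must be non-edges) --- each requiring its own Ramsey layer. The $K_{q,q}$-free assumption is invoked precisely to rule out the proliferation of the last type; without it the extracted alternating walk would carry long chords. Assembling these nested Ramsey applications yields an explicit, tower-type bound for $Z(s,t,q)$ in terms of iterated Ramsey numbers.
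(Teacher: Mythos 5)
First, a point of reference: the paper does not prove this statement at all --- Lemma~\ref{ramsey} is imported verbatim from Atminas, Lozin and Razgon~\cite{ramsey} --- so there is no in-paper argument to measure yours against, and your proposal has to be judged as a self-contained proof. As such it is not complete: it fully handles only the degenerate case in which consecutive members of the extracted independent set sit at distance exactly $2$ on $P$, and the reduction of the general case to that one is exactly where the real content of the lemma lies.

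Two concrete problems. (i) The ``third Ramsey layer on the gap-lengths'' is not available as stated: the gaps $i_{k+1}-i_k$ range over an unbounded set of integers, so neither pigeonhole nor Ramsey yields a large sub-selection on which all gaps share a common value; worse, every sub-selection you perform (including your second and fourth layers) replaces consecutive gaps by sums of gaps, so even an initially uniform gap of $2$ is destroyed, and with it the local adjacency $w_k\sim v_{i_{k+1}}$ that your alternating sequence relies on. (ii) More fundamentally, for a gap of length $g\ge 4$ the two witnesses $v_{i_k+1}$ and $v_{i_{k+1}-1}$ are distinct and in general non-adjacent --- they are joined only by the intervening subpath of $P$, which may be long and riddled with chords --- so the proposed alternating sequence is not even a walk, and the claim that this ``reduces to the same situation'' by symmetry is false. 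Bridging a long, chorded stretch of $P$ between two independent vertices by an induced connection is precisely the difficulty the lemma exists to overcome; it is the step where the $K_{q,q}$-freeness must interact with the internal structure of each gap (for instance via shortest, hence induced, paths inside each gap together with an analysis of how these short paths attach to one another), not merely with the cross-adjacencies between the $v_{i_k}$ and the $w_l$. Your final order-Ramsey extraction killing long-range chords is fine in spirit, but it is applied to a sequence that has not been shown to be a path, so the argument does not close.
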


Now we are ready to prove Theorem \ref{main}.

\begin{lemma}
\label{l}
There exists a constant $l$ such that all graphs in $S$ are $P_l$-sbgr-free.
\end{lemma}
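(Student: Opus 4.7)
The plan is to combine Lemma \ref{ramsey} with the observation that $H$-list-colorable graphs have bounded clique number. In short, any graph containing a very long path as a subgraph must, by Lemma \ref{ramsey}, contain an induced $P_k$, an induced $K_{t,t}$, or a large clique; the first two are forbidden because $G\in C$, and the third will be ruled out by colorability.

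For the clique bound, I would first observe that if $(G,f)\in S$, any $H$-list-coloring of $(G,f)$ is in particular a graph homomorphism $\varphi\colon V(G)\to V(H)$. Since the paper deals only with loopless graphs, $\varphi$ must send any clique of $G$ injectively to a clique of $H$ of the same size. Hence $\omega(G)\le\omega(H)\le |V(H)|$ for every $(G,f)\in S$.

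Next, set $l:=Z(k,\,|V(H)|+1,\,t)+1$, where $Z$ is the Ramsey-type function supplied by Lemma \ref{ramsey}. Suppose for contradiction that some $(G,f)\in S$ contains $P_l$ as a subgraph. Then $G$ has a path whose length is at least $Z(k,|V(H)|+1,t)$, so Lemma \ref{ramsey} (instantiated with induced-path parameter $k$, biclique parameter $t$, and clique parameter $|V(H)|+1$) implies that $G$ contains one of $P_k$, $K_{t,t}$, or $K_{|V(H)|+1}$ as an induced subgraph. The first two are excluded by $G\in C$, and the third contradicts $\omega(G)\le|V(H)|$. This contradiction establishes that every $(G,f)\in S$ is $P_l$-sbgr-free.

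I do not anticipate any genuine structural obstacle: both the reduction of ``long path subgraph'' to ``one of three induced substructures'' and the reduction of ``colorability'' to ``bounded clique number'' are packaged. The only care required is bookkeeping, namely choosing $l$ in accordance with the ``length = number of edges'' convention implicit in Lemma \ref{ramsey} versus the ``$P_k$ has $k$ vertices'' convention used throughout the paper, and explicitly invoking loop-freeness of $H$ to pass from $H$-list-colorability to the bound $\omega(G)\le\omega(H)$.
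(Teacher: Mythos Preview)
Your proof is correct and follows essentially the same approach as the paper: bound $\omega(G)$ via the homomorphism to $H$, then apply Lemma~\ref{ramsey} to exclude long paths in any $(P_k,K_{t,t})$-free graph of bounded clique number. The only cosmetic difference is that the paper uses the slightly sharper bound $\omega(G)\le\omega(H)$ (yielding $K_{\omega(H)+1}$-freeness) rather than your $\omega(G)\le|V(H)|$, and it omits your careful bookkeeping about path length versus vertex count.
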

\begin{proof}
Let $(G,f) \in S$.
Vertices in $G$ that form a clique can only be colored by distinct vertices of $H$
that form a clique in $H$.
This means that $\omega(G) \leq \omega(H)$, so graphs in $S$ are
$K_{\omega(H)+1}$-free. They belong to $C$, so they are also $(P_k,K_{t,t})$-free.
We get the result from Lemma \ref{ramsey}.
\end{proof}

\begin{lemma}
\label{l'}
There exists a constant $l'$ such that all graphs in $A$ are $P_{l'}$-sbgr-free.
\end{lemma}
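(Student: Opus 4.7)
\noindent\emph{Proof plan.} My plan is to reduce Lemma \ref{l'} directly to Lemma \ref{l} by a single-vertex deletion argument. Pick any $(G,f) \in A$ and any vertex $v \in V(G)$. The labeled graph $(G - v,\, f|_{V(G)\setminus\{v\}})$ is strictly $\leq_l$-smaller than $(G,f)$: taking $\sigma$ to be the inclusion, we have $G - v \leq G$, and $f(u) \leq f(u)$ trivially for every remaining $u$. By $\leq_l$-minimality, $(G - v,\, f|_{V(G)\setminus\{v\}})$ must be $H$-list-colorable, and therefore lies in $S$. Lemma \ref{l} then forces $G - v$ to be $P_l$-sbgr-free.

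The second step turns this into a bound on subpaths inside $G$ itself. Set $l' = l+1$ and suppose for contradiction that $G$ contained a subpath $v_1, v_2, \ldots, v_{l'}$. Deleting the endpoint $v_1$ leaves an induced sub-labeled-graph whose vertex set still contains $v_2, \ldots, v_{l'}$ together with all of the edges $v_2 v_3,\, v_3 v_4,\, \ldots,\, v_{l'-1} v_{l'}$. Hence $G - v_1$ contains $P_l$ as a (not necessarily induced) subgraph, contradicting the previous paragraph. Thus every $(G,f) \in A$ is $P_{l'}$-sbgr-free with $l' = l+1$, where $l$ is the constant from Lemma \ref{l}.

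I do not anticipate a real obstacle: the only points to verify are (i) that restricting a $Q$-labeling to a smaller vertex set gives a strictly $\leq_l$-smaller labeled graph, which is immediate from the definition of $\leq_l$, and (ii) that removing one vertex from a $P_{l+1}$-subpath cannot destroy an entire $P_l$-subpath, which is clear by taking $v_1$ to be an endpoint. The same argument goes through verbatim for the $\subseteq_l$-minimal set $A'$, since vertex deletion is monotone under both $\leq$ and $\subseteq$ and the label restriction again trivially satisfies the label inequality.
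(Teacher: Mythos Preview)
Your argument is correct and uses the same core idea as the paper: delete a single vertex, invoke $\leq_l$-minimality to land in $S$, and then apply Lemma~\ref{l}. The one difference is in the order of quantifiers. The paper first fixes a vertex $v$ with $G-v \in S$ and then bounds an arbitrary path $P$ in $G$ by splitting $P$ at $v$ into two subpaths, each lying in $G-v$; this yields $l' = 2l$. You instead fix the hypothetical long path first and then choose $v$ to be one of its endpoints, so the remaining $l$ vertices still form a subpath of $G - v_1$; this yields the sharper constant $l' = l+1$. Both routes are valid; yours simply exploits the freedom to tailor the deleted vertex to the path, which the paper does not. (Your closing remark about $A'$ is also correct, though the lemma as stated only concerns $A$.)
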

\begin{proof}
Take a graph in $(G,f) \in A$. There exists $(G',f') \in S$ such that $(G',f') \leq_l (G,f)$.
$(G,f)$ is $\leq_l$-minimal.
If $G'$ has the same vertex set as $G$ then $G$ and $G'$ differ only by labels and we are done.
Otherwise, we can choose $G'$ in such way that $G' = G - \{ v \}$ for some vertex $v \in G$.
Take any path $P$ in $G$. If $P$ does not use vertex $v$ then its length is at most $l-1$
from previous lemma.
Otherwise, vertex $v$ splits the path into two parts, both included in $G'$,
so each part is of length at most $l-1$ from previous lemma. Adding vertex $v$ we obtain that $P$ is
of length at most $2(l-1) + 1 = 2l - 1$. This means that we can take $l' = 2l$.
\end{proof}

\begin{lemma}
\label{Awqo}
Graphs in $A$ are wqo by $\leq_l$ relation.
\end{lemma}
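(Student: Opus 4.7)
The plan is to combine the structural information already extracted about $A$ with the two external lemmas. By Lemma~\ref{l'} every graph in $A$ is $P_{l'}$-sbgr-free, so the underlying (unlabeled) graphs of $A$ live inside the class $X$ of $P_{l'}$-sbgr-free graphs. To apply Lemma~\ref{wqo} to $(X(Q),\leq_l)$, all I need is that the label quasi-ordering $(Q,\leq)$ is itself a wqo.

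The key observation is that $Q\subseteq P(V(H))$, and since $H$ is a fixed graph, $V(H)$ is finite, hence $P(V(H))$ is finite and so is $Q$. Any reflexive and transitive relation on a finite set is automatically a well-quasi-order: in any infinite sequence $q_0,q_1,q_2,\ldots$ some element must repeat, say $q_i=q_j$ with $i<j$, and then $q_i\leq q_j$ by reflexivity. Thus $(Q,\leq)$ is a wqo.

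Now I invoke Lemma~\ref{wqo} with this $X$ and this $(Q,\leq)$, which gives that $(X(Q),\leq_l)$ is a wqo. Every element of $A$ is a $Q$-labeled graph whose underlying graph lies in $X$, so $A\subseteq X(Q)$. Any restriction of a well-quasi-order to a subclass is still a well-quasi-order (an infinite bad sequence in the subclass would also be a bad sequence in the ambient class), so $(A,\leq_l)$ is wqo, as required.

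There is no real obstacle here; the work was all done in Lemma~\ref{l'} (bounding the longest path subgraph) and in recognizing that the label set comes from the power set of the finite vertex set $V(H)$, which makes the label wqo hypothesis of Lemma~\ref{wqo} trivial. The only subtle point worth flagging explicitly in the write-up is that the ordering $A\leq B\iff B\subseteq A$ on $Q$ is indeed a quasi-ordering (reflexive and transitive from set inclusion), so that Lemma~\ref{wqo} is genuinely applicable.
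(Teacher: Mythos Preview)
Your proof is correct and follows essentially the same route as the paper: use Lemma~\ref{l'} to place $A$ inside the $P_{l'}$-sbgr-free graphs, observe that $(Q,\leq)$ is a wqo because $Q$ is finite, apply Lemma~\ref{wqo}, and restrict to the subclass $A$. The paper's argument is the same, only terser.
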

\begin{proof}
From Lemma \ref{l'} graphs in $A$ are $P_{l'}$-sbgr-free. $(Q, \leq)$ is wqo because it is
quasi ordering of a finite set. From Lemma \ref{wqo} class of $Q$-labeled $P_{l'}$-sbgr-free graphs
is wqo by $\leq_l$. $A$ is a subset of this class, so it is also wqo by $\leq_l$ relation.
\end{proof}

\begin{lemma}
$A$ is finite.
\end{lemma}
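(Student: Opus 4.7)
The plan is to derive finiteness of $A$ from Lemma \ref{Awqo} via the standard fact that a well-quasi-ordered set contains no infinite antichain. The key intermediate step is to observe that, up to isomorphism of labeled graphs, $A$ is an antichain under $\leq_l$.

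First I would verify this antichain property. Suppose $(G,f), (G',f') \in A$ and $(G,f) \leq_l (G',f')$. Recall from the outset of the proof that the class $S$ of colorable labeled graphs is downward closed under $\leq_l$, so non-colorability is upward closed. If the inequality $(G,f) \leq_l (G',f')$ were strict, then $(G,f)$ would be a strictly smaller non-colorable member of $C(Q)$, contradicting the $\leq_l$-minimality of $(G',f')$. If instead $(G,f) \leq_l (G',f')$ and $(G',f') \leq_l (G,f)$ simultaneously, then $G \cong G'$, and since the subset ordering on $Q \subseteq P(V(H))$ is antisymmetric, the labels must agree along the isomorphism; hence $(G,f)$ and $(G',f')$ represent the same labeled graph.

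Next I would conclude. Assume for contradiction that $A$ is infinite, and enumerate its members as a sequence $(G_1,f_1), (G_2,f_2), \ldots$ of pairwise non-isomorphic labeled graphs. By Lemma \ref{Awqo}, there exist indices $i<j$ with $(G_i,f_i) \leq_l (G_j,f_j)$; by the previous paragraph this inequality must be strict, which contradicts the $\leq_l$-minimality of $(G_j,f_j)$. Therefore $A$ is finite. The argument is essentially a textbook consequence of well-quasi-ordering, so there is no genuine obstacle; the only care needed is in handling the $\leq_l$-equivalence case, which is dispatched by the antisymmetry of the label ordering.
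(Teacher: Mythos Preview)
Your proof is correct and follows the same approach as the paper: use Lemma~\ref{Awqo} to conclude that $A$ is wqo under $\leq_l$, observe that $A$ is an antichain by $\leq_l$-minimality, and deduce finiteness since a wqo admits no infinite antichain. The paper's version is terser and does not explicitly handle the $\leq_l$-equivalence case, but your added care there is harmless and the arguments are otherwise identical.
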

\begin{proof}
From Lemma \ref{Awqo} graphs in $A$ are wqo by $\leq_l$ relation, so $A$ contains no
infinite antichain. $A$ consists of $\leq_l$-minimal graphs, so $A$ is an antichain.
Thus $A$ cannot be infinite.
\end{proof}

\begin{lemma}
$A'$ is finite.
\end{lemma}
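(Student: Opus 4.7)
The plan is to adapt the proof that $A$ is finite, replacing the vertex-deletion step with an edge-deletion step. The key observation is: for any $(G,f) \in A'$ containing an edge $e$, the graph $(G-e,f)$ is strictly $\subseteq_l$-smaller than $(G,f)$, hence by $\subseteq_l$-minimality it lies in $S$.

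First I would establish the analogue of Lemma \ref{l'}: there is a constant $l''$ such that every graph in $A'$ is $P_{l''}$-sbgr-free. Take $(G,f) \in A'$ with at least one edge $e$; by Lemma \ref{l} applied to $(G-e,f) \in S$, the graph $G - e$ is $P_l$-sbgr-free. A path in $G$ that avoids $e$ lies entirely in $G-e$ and has length at most $l-1$; a path using $e$ is split by its deletion into two paths in $G-e$ of total length at most $2(l-1)+1 = 2l-1$. If $G$ is edgeless, then $(G,f)$ failing to be colorable forces some vertex to have the empty label, and $\subseteq_l$-minimality then forces $G$ to consist of that single vertex, which is trivially $P_2$-sbgr-free. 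Thus $l'' = 2l$ suffices.

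Next I would apply Lemma \ref{wqo} to the class of $Q$-labeled $P_{l''}$-sbgr-free graphs to conclude that this class is wqo under $\leq_l$. Since every embedding witnessing $H \leq G$ is also an embedding witnessing $H \subseteq G$, the ordering $\leq_l$ refines $\subseteq_l$; hence the class is wqo under $\subseteq_l$ as well, because any infinite sequence with a $\leq_l$-comparable pair has the same pair comparable under $\subseteq_l$. The set $A'$ lies in this wqo class and is a $\subseteq_l$-antichain by its defining minimality, so $A'$ must be finite.

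The main subtlety is the transfer of wqo from $\leq_l$ to $\subseteq_l$, which is a one-line observation about the definitions, together with cleanly handling the edgeless degenerate case. Everything else parallels the proof for $A$ almost verbatim.
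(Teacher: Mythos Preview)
Your overall strategy (bound the path length, then invoke Ding's wqo and the antichain argument) is sound, but the first step has a genuine gap. You assert that for $(G,f)\in A'$ with an edge $e$, the pair $(G-e,f)$ lies in $S$ by $\subseteq_l$-minimality. This is not justified: the class $C$ of $(P_k,K_{t,t})$-free graphs is defined by forbidden \emph{induced} subgraphs and is therefore \emph{not} closed under edge deletion. Removing an edge can create an induced $P_k$ (e.g.\ $C_k-e=P_k$) or an induced $K_{t,t}$. Consequently $G-e$ need not belong to $C$, so the $\subseteq_l$-minimality of $(G,f)$ \emph{within $C(Q)$} tells you nothing about $(G-e,f)$, and even if $(G-e,f)$ were colorable it would not lie in $S\subseteq C(Q)$, so Lemma~\ref{l} cannot be applied to it.

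The hole is patchable: you can bound $\omega(G)$ directly by noting that an induced $K_{\omega(H)+1}\subseteq G$ is itself a non-colorable member of $C(Q)$ strictly $\subseteq_l$-below $(G,f)$ unless it equals $G$; together with $G\in C$ and Lemma~\ref{ramsey} this yields the desired path bound on $G$ without ever leaving $C$. However, the paper bypasses all of this with a three-line argument: since $(G,f)\in A'$ is not colorable, some $(G',f')\in A$ satisfies $(G',f')\leq_l(G,f)$; but $\leq_l$ implies $\subseteq_l$, and $\subseteq_l$-minimality of $(G,f)$ forces $(G',f')=(G,f)$. Hence $A'\subseteq A$, and finiteness of $A'$ follows immediately from the finiteness of $A$ already proved. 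This avoids reproving the path bound and the wqo transfer altogether.
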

\begin{proof}
Take any graph $(G,f) \in A'$. $(G,f)$ is not colorable, so there exists $(G',f') \in A$ such that
$(G',f') \leq_l (G,f)$. Then also $(G',f') \subseteq_l (G,f)$. Since $(G,f)$ is $\subseteq_l$-minimal
not colorable we have $(G',f') = (G,f)$. This means that $A' \subseteq A$, so $A'$ is finite.
\end{proof}

We have proved that $A$ and $A'$ are finite, which finishes the proof of Theorem \ref{main}.

\end{subsection}

\end{section}

\begin{section}{$C_{k-2}$-coloring $P_k$-free graphs}

In this section we will prove the following two theorems.

\begin{theorem} \label{main1}
Let $k>4$ be an integer. Any $\leq$-minimal $P_k$-free not $C_{k-2}$-colorable graph has at most $3k+28$ vertices.
\end{theorem}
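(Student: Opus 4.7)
The plan is to identify a low-complexity skeleton of $G$ (an odd cycle) and then bound the number of vertices outside it. The starting observation is the classical cycle-homomorphism fact: for odd $n$, $C_m \to C_n$ iff $m$ is even or $m \geq n$, while for even $n$, $C_m \to C_n$ iff $m$ is even. Since $\leq$-minimality of $G$ forces every proper induced subgraph of $G$ to be $C_{k-2}$-colorable, $G$ cannot properly contain any non-$C_{k-2}$-colorable graph as an induced subgraph.

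I first dispose of the easy sub-cases. If $k$ is even then $C_{k-2}$ is bipartite and $C_{k-2}$-coloring is just proper $2$-coloring; the minimal non-bipartite graphs are exactly odd cycles, and in a $P_k$-free graph no induced cycle has length greater than $k$, so $|V(G)| \leq k-1$. If $k$ is odd but $G$ contains an induced odd cycle $C_\ell$ with $\ell < k-2$, then by the homomorphism rule $C_\ell$ itself is not $C_{k-2}$-colorable, so minimality forces $G = C_\ell$ and $|V(G)| \leq k-4$. Both of these land well within the claimed bound.

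The remaining case is $k$ odd and $G$ of odd girth at least $k-2$. Since $P_k$-freeness bounds induced cycle length by $k$, the shortest odd cycle $C$ of $G$ is induced and has length $\ell \in \{k-2, k\}$; label $V(C) = \{c_0, \ldots, c_{\ell-1}\}$ cyclically. For each $v \in V(G) \setminus V(C)$, minimality supplies a $C_{k-2}$-coloring $\phi_v$ of $G - v$ that fails to extend to $v$. When $\ell = k-2$ the restriction $\phi_v|_C$ is a self-homomorphism of $C_{k-2}$ and hence an automorphism, so after relabeling we may assume $\phi_v(c_i) = i$; non-extendability is then the combinatorial statement that the colors on $N(v)$ have no common neighbor in $C_{k-2}$, i.e.\ either at least three distinct colors appear or exactly two appear that are not at distance two on the cycle.

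The heart of the argument is bounding $|V(G) \setminus V(C)|$. $P_k$-freeness severely restricts $N(v) \cap V(C)$: any long gap of non-neighbors along $C$ combined with $v$ would yield an induced $P_k$, so $N(v) \cap V(C)$ must consist of a bounded number of short arcs separated by bounded gaps. This leaves only boundedly many possible ``types'' of outside vertex relative to $C$, and $\leq$-minimality then forbids too many vertices of the same type, since two outside vertices behaving identically with respect to the rest of $G$ would permit deleting one without recovering colorability. I expect the main obstacle to be the sub-case $\ell = k$, in which $G$ has no induced $C_{k-2}$ and hence $\phi_v|_C$ enjoys more freedom; here one must exploit that any $C_{k-2}$-coloring of the $k$-cycle must ``fold'' it in a highly constrained way. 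Adding up the length-$\ell$ spine together with the constantly many exceptional outside vertices should give the claimed $3k+28$ bound.
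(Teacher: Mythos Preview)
Your setup through the reduction to $k$ odd with a fixed shortest odd cycle $C$ of length $\ell\in\{k-2,k\}$ matches the paper. The gap is in the step ``boundedly many $C$-types, hence by minimality boundedly many outside vertices''. Two vertices with the same neighbourhood on $C$ need not be twins in $G$: they can have arbitrary and distinct neighbours among the other outside vertices, so the twin-deletion argument you invoke does not bound how many vertices share a given $C$-type. Your sketch also says nothing about vertices at distance $\geq 2$ from $C$, of which there can a priori be many; the $\phi_v$ you introduce is never actually used to control them.

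The paper does not bound $|V(G)|$ by type-counting. Instead it shows constructively that, for any fixed $C_{k-2}$-coloring of $C$, either that coloring extends to all of $G$ or there is an explicit non-extendable induced subgraph $H\supseteq C$ with $|V(H)\setminus V(C)|$ bounded by an absolute constant; minimality then forces $G=H$. The ingredients are: (i) a classification of $N(C)$ into $O(k)$ classes $D_i,T_i$, most of which have a forced color once $C$ is colored; (ii) a proof that every component of $G\setminus(C\cup N(C))$ is bipartite with each side having a \emph{uniform} neighbourhood among the $T$-vertices, so a single edge represents the whole component; and (iii) a $P_k$/$C_k$-freeness bound on the length of color-propagation chains along $D_i$--$D_{i\pm1}$ edges. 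When $\ell=k$ there are several admissible colorings of $C$; an auxiliary two-vertex gadget cuts them down to at most five, and combining the five witnesses gives $|V(G)|\leq k+42$. When $\ell=k-2$ the coloring of $C$ is unique, but the propagation chains in (iii) can have length up to $k-2$ and yield $|V(G)|\leq 3k+2$. So, contrary to your expectation, it is the $\ell=k-2$ case that produces the dominant $3k$ term.
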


\begin{theorem} \label{main2}
There exists a polynomial  certifying algorithm for $C_{k-2}$-coloring $P_k$-free graphs.
\end{theorem}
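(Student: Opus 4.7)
The plan is to prove Theorem~\ref{main2} by combining Theorem~\ref{main1} with a brute-force decision procedure and a search-to-decision reduction. For the witness-producing half of the certifying algorithm: given a $P_k$-free input $G$, enumerate every induced subgraph on at most $3k+28$ vertices --- there are $O(n^{3k+28})$ of them, polynomial for fixed $k$ --- and test each for $C_{k-2}$-colorability by brute force in constant time. By Theorem~\ref{main1}, $G$ fails to be $C_{k-2}$-colorable exactly when some such subgraph is non-colorable. When one is found, iterated vertex deletion trims it to a $\leq$-minimal non-colorable induced subgraph, which is returned as the certificate.

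For the coloring-producing half I would apply the standard self-reduction driven by the above decision oracle. Fix an ordering $v_1,\ldots,v_n$ of $V(G)$; at step $i$, try each color $c\in V(C_{k-2})$ by attaching to $v_i$ a constant-sized color-pinning gadget that forces $v_i$ to color $c$ in every $C_{k-2}$-coloring, and then invoke the decision oracle on the augmented graph. Accept the first $c$ under which the oracle reports colorability, and recurse. After at most $n$ steps the committed colors yield the required coloring, and the whole procedure runs in polynomial time since it makes $O(nk)$ oracle calls, each on a graph of size $O(n)$.

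The main obstacle is designing the color-pinning gadget so that (i) it truly forces the desired color, and (ii) attaching it does not create a new induced $P_k$. For even $k-2$ the cycle is bipartite and the problem essentially collapses to $2$-coloring, which makes the construction trivial. The substantive case is odd $k-2$, where I would exploit the odd-girth rigidity of $C_{k-2}$ by attaching a short arc of $C_{k-2}$ itself, with the two endpoints of the arc identified with suitable neighbors of $v_i$; since odd cycles are homomorphic cores, this pins a unique color on $v_i$, and the small diameter of the arc (at most $(k-2)/2$) ought to suffice for $P_k$-freeness after attachment. An alternative route, sidestepping gadgets entirely, is to lift Theorem~\ref{main1} to $C_{k-2}$-list-colorings by re-running its proof with $Q = P(V(C_{k-2}))$-labels as in Section~2; this would let the pinning be encoded directly in vertex lists.
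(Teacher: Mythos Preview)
Your witness-producing half is sound and matches the paper's intent: enumerate bounded-size induced subgraphs and return a non-colorable one when found. The paper does essentially this, though it also follows the explicit case analysis of Section~3 rather than a blind enumeration.

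The coloring-producing half, however, has a genuine gap. Your decision oracle is only guaranteed correct on $P_k$-free inputs, since Theorem~\ref{main1} says nothing about graphs containing an induced $P_k$. So the self-reduction works only if every gadget-augmented graph remains $P_k$-free, and this is exactly where the argument is incomplete. Attaching any nontrivial pendant structure to a vertex $v_i$ risks creating an induced $P_k$: if $v_i$ already sits at one end of an induced $P_{k-1}$ in $G$ (which $P_k$-freeness permits), then even a single new pendant vertex yields $P_k$. Your proposed arc gadget, identified with two neighbours of $v_i$, presupposes that $v_i$ has two suitable non-adjacent neighbours and that the arc does not combine with existing induced paths through those neighbours; none of this is argued, and the phrase ``ought to suffice'' is not a proof. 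Moreover, a single arc of $C_{k-2}$ does not by itself pin a unique colour on $v_i$: the dihedral automorphism group of $C_{k-2}$ acts transitively, so without previously fixed anchors the pinning is vacuous. Your fallback of lifting Theorem~\ref{main1} to list-colourings is a reasonable idea, but it would require redoing the entire structural case analysis of Section~3 with lists, which is real work you have not done (and Section~2's wqo argument gives no explicit size bound).

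The paper sidesteps all of this. The proof of Theorem~\ref{main1} is \emph{constructive}: Lemmas~\ref{always}--\ref{Ck} and \ref{colS1}--\ref{colS} do not merely assert the existence of a colouring when no small obstruction is present, they explicitly describe how to build one (colour $C$, propagate forced colours to $T$ and $D$, handle components of $G-S$ via representative edges, finish the remaining $D$-vertices by the ``small colour'' rule). The algorithm of Theorem~\ref{main2} simply executes this case analysis, as summarised in the paper's ``Algorithm summary'' paragraph. No self-reduction, no gadgets, and no risk of leaving the $P_k$-free world.
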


Let $G$ be a $P_k$-free graph. First, let us assume that $k$ is even. In this case, $G$ is $C_{k-2}$-colorable
iff $G$ is 2-colorable. If $G$ is bipartite, we can color $G$ by two subsequent vertices
of $C_{k-2}$ in linear time. Otherwise, $G$ contains an odd cycle of length at most $k-1$. We can find this odd cycle and return as a witness of size at most $k-1$.

The case of $k=5$ is the problem of 3-coloring $P_5$-free graphs. We know from \cite{4critP6} that there is a finite
number of 4-critical $P_5$-free graphs and they have at most $16$  vertices.

We can assume that $k$ is odd and $k \geq 7$.
If $G$ is bipartite, then $G$ is clearly $C_{k-2}$-colorable,
because again we can use only two subsequent vertices of $C_{k-2}$ to color $G$. Otherwise, $G$ contains an odd
cycle of length at most $k$. If $G$ contains an odd cycle of length $l \leq k-4$, then $G$ is not
$C_{k-2}$-colorable, since $C_l$ is not $C_{k-2}$-colorable. Hence, any odd cycle of $G$ is of length $k-2$ or $k$. We will consider two cases: $G$ contains $C_k$ and $G$ is $C_k$-free.

\begin{subsection}{$G$ contains an induced $C_k$}

Let $C$ be a fixed cycle of length $k$ in $G$. Let $v_1,\ldots,v_k$ be
vertices of $C$. Let $c(v)$ denote the color of vertex $v$.
We start from describing the neighbourhood of $C$.

\begin{lemma}
Let $v \in N(C)$. There are only three possibilities:
\begin{enumerate}[(i)]
\item $v$ has $2$ neighbours in $C$, $v_i$ and $v_{i+2}$ (we denote $v \in D_i$)
\item $v$ has $2$ neighbours in $C$, $v_i$ and $v_{i+4}$ (we denote $v \in T_i$)
\item $v$ has $3$ neighbours in $C$, $v_i$, $v_{i+2}$ and $v_{i+4}$ (we denote $v \in T_i$ as well)
\end{enumerate}
for some $1 \leq i \leq k$ (see Figure \ref{picturek}).
\end{lemma}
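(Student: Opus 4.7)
My plan is to describe $N(v) \cap V(C)$ via its gap sequence around $C$. Write the neighbours of $v$ on $C$ in cyclic order as $v_{i_1}, \ldots, v_{i_m}$ and set $d_j := i_{j+1} - i_j \pmod k$ (with $i_{m+1} := i_1$), so that $d_1 + \cdots + d_m = k$. The three admissible configurations will drop out from a few restrictions on $(d_1, \ldots, d_m)$ plus a short arithmetic case analysis.

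First I would rule out $m = 1$: if $v$ had a single neighbour $v_{i_1}$ on $C$, then $v, v_{i_1}, v_{i_1 + 1}, \ldots, v_{i_1 - 2}$ is an induced path on $k$ vertices, because $C$ is induced and $v$ has no further neighbour on $C$. This contradicts $P_k$-freeness. Next I would rule out $d_j = 1$: such a gap produces a triangle $v v_{i_j} v_{i_j+1}$, an odd cycle of length $3 \leq k-4$ since $k \geq 7$, which is forbidden by the standing assumption that every odd cycle of $G$ has length $k-2$ or $k$.

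The heart of the argument is the restriction on odd gaps. For each $j$, the closed walk $C_j := v \, v_{i_j} \, v_{i_j + 1} \cdots v_{i_{j+1}} \, v$ is a cycle of length $d_j + 2$, and it is induced: the arc $v_{i_j}, \ldots, v_{i_{j+1}}$ is chordless because $C$ is induced, and $v$ has no neighbour among the interior vertices $v_{i_j + 1}, \ldots, v_{i_{j+1} - 1}$ by the defining property of $d_j$. Hence if $d_j$ is odd then $C_j$ is an induced odd cycle, forcing $d_j + 2 \in \{k-2, k\}$ and thus $d_j \in \{k-4, k-2\}$. I expect the chordlessness check here to be the only mildly delicate point in the proof; everything else is arithmetic on the gap sequence.

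To conclude, the number of odd gaps has the same parity as $\sum d_j = k$, hence is odd and at least $1$; and three or more odd gaps would force $\sum d_j \geq 3(k-4) > k$ for $k \geq 7$, which is impossible. So there is exactly one odd gap $d^\star \in \{k-4, k-2\}$, while the remaining gaps are even, each $\geq 2$, and sum to $k - d^\star \in \{2, 4\}$. The three resulting gap sequences $(k-2, 2)$, $(k-4, 4)$, and $(k-4, 2, 2)$ correspond precisely to cases (i), (ii), and (iii) of the lemma.
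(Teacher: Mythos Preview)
Your proof is correct and follows the same line as the paper's: rule out a single neighbour via an induced $P_k$, rule out gap $1$ via a triangle, and force every odd gap into $\{k-4,k-2\}$ via the odd-cycle constraint. Your gap-sequence formalism and the explicit parity count showing there is exactly one odd gap are a cleaner replacement for the paper's terse ``this gives us neighbours in distance $2$ or $4$ only,'' but the underlying argument is the same.
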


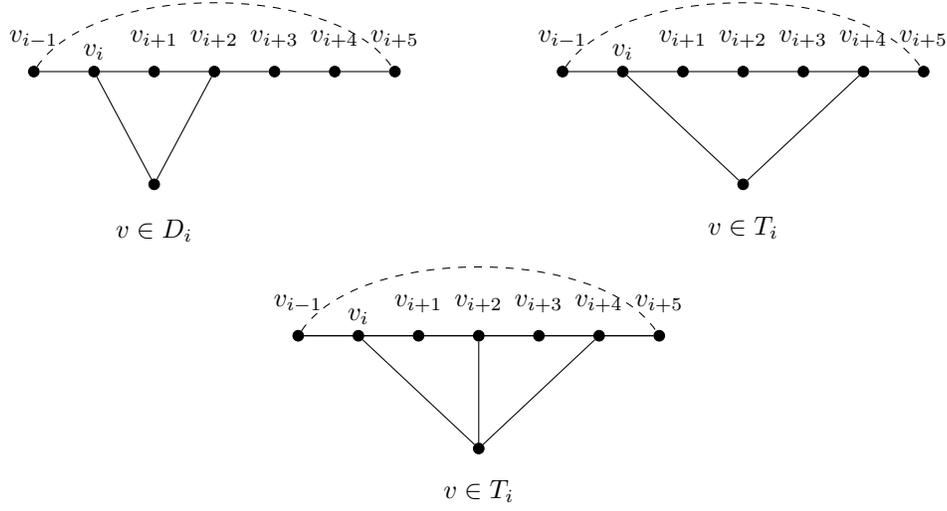
\begin{figure}
\centering
\begin{tikzpicture}
\begin{scope}
    \node (v1) [label=$v_{i-1}$] at (-1*0.8,0) {};
    \node (v2) [label=$v_{i  }$] at (0*0.8,0) {};
    \node (v3) [label=$v_{i+1}$] at (1*0.8,0) {};
    \node (v4) [label=$v_{i+2}$] at (2*0.8,0) {};
    \node (v5) [label=$v_{i+3}$] at (3*0.8,0) {};
    \node (v6) [label=$v_{i+4}$] at (4*0.8,0) {};
    \node (v7) [label=$v_{i+5}$] at (5*0.8,0) {};
    \node (v8) [label=below:$v \in D_i$] at (1*0.8,-1.5) {};
    \draw (v1) -- (v2);
    \draw (v2) -- (v3);
    \draw (v3) -- (v4);
    \draw (v4) -- (v5);
    \draw (v5) -- (v6);
    \draw (v6) -- (v7);
    \draw (v2) -- (v8);
    \draw (v4) -- (v8);
    \draw [dashed] (v1) .. controls (0,1.2) and (4*0.8,1.2)  .. (v7);
\end{scope}
\begin{scope}[xshift=200]
    \node (v1) [label=$v_{i-1}$] at (-1*0.8,0) {};
    \node (v2) [label=$v_{i  }$] at (0*0.8,0) {};
    \node (v3) [label=$v_{i+1}$] at (1*0.8,0) {};
    \node (v4) [label=$v_{i+2}$] at (2*0.8,0) {};
    \node (v5) [label=$v_{i+3}$] at (3*0.8,0) {};
    \node (v6) [label=$v_{i+4}$] at (4*0.8,0) {};
    \node (v7) [label=$v_{i+5}$] at (5*0.8,0) {};
    \node (v8) [label=below:$v \in T_i$] at (2*0.8,-1.5) {};
    \draw (v1) -- (v2);
    \draw (v2) -- (v3);
    \draw (v3) -- (v4);
    \draw (v4) -- (v5);
    \draw (v5) -- (v6);
    \draw (v6) -- (v7);
    \draw (v2) -- (v8);
    \draw (v6) -- (v8);
    \draw [dashed] (v1) .. controls (0,1.2) and (4*0.8,1.2)  .. (v7);
\end{scope}
\begin{scope}[xshift=100,yshift=-100]
    \node (v1) [label=$v_{i-1}$] at (-1*0.8,0) {};
    \node (v2) [label=$v_{i  }$] at (0*0.8,0) {};
    \node (v3) [label=$v_{i+1}$] at (1*0.8,0) {};
    \node (v4) [label=$v_{i+2}$] at (2*0.8,0) {};
    \node (v5) [label=$v_{i+3}$] at (3*0.8,0) {};
    \node (v6) [label=$v_{i+4}$] at (4*0.8,0) {};
    \node (v7) [label=$v_{i+5}$] at (5*0.8,0) {};
    \node (v8) [label=below:$v \in T_i$] at (2*0.8,-1.5) {};
    \draw (v1) -- (v2);
    \draw (v2) -- (v3);
    \draw (v3) -- (v4);
    \draw (v4) -- (v5);
    \draw (v5) -- (v6);
    \draw (v6) -- (v7);
    \draw (v2) -- (v8);
    \draw (v4) -- (v8);
    \draw (v6) -- (v8);
    \draw (v1) -- (v7);
    \draw [dashed] (v1) .. controls (0,1.2) and (4*0.8,1.2)  .. (v7);
\end{scope}
\end{tikzpicture}
\caption{Possible connections of $v \in N(C)$ to the cycle $C$ of length $k$. \label{picturek}}
\end{figure}

\begin{proof}
Suppose $v$ is adjacent to exactly one vertex of $C$, then we obtain $P_k$ in $G$. $v$ cannot have
two consecutive vertices of $C$ as neighbours since this would form a triangle. Suppose $v$ is
connected to some vertex $i$ and $i+t$ where $t$ is odd, $3 \leq t < k - 4$
and $v$ is not connected to $i+1, ..., i+t-1$. In this case we obtain an odd cycle of length smaller
than $k-2$. Notice that for every neighbour $v$ of $C$ there will be such $i$ and $t$ that
$v$ is connected to $i$ and $i+t$, $t$ is odd, $3 \leq t \leq k-2$ and $v$ is not connected to
$i+1,...,i+t-1$. So the only allowed values of $t$ are $k-4$ and $k-2$.
This gives us neighbours in distance 2 or 4 only.
\end{proof}

Let $D = \cup_{i=1}^k D_i$ and $T = \cup_{i=1}^k T_i$.
Let us denote by $S$ the set of vertices $S = C \cup D \cup T$.

\begin{lemma}
Let $v \in D_i$. $N(v) \subseteq C \cup T \cup D_{i\pm1} \cup D_{i\pm3}$. 
\end{lemma}

\begin{proof}
$v$ cannot have neighbours in $G-S$ since this would form $P_k$.
$D_i$ is a stable set since otherwise we would have
a triangle. Vertices from $D_i$ cannot be connected to $D_{i+2}$ since this would form a triangle.
Vertices from $D_i$ cannot be connected to vertices of $D_{i+t}$ for
$4 \leq t \leq k - 4$ since this would give an odd cycle of length at most $k-4$.
\end{proof}

\begin{lemma} \label{always}
If $G$ has no vertices of type $T$ and no edge between $D_i$ and $D_{i+3}$
then $G$ is $C_{k-2}$-colorable.
\end{lemma}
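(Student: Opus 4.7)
The plan is to exhibit an explicit $C_{k-2}$-colouring of $G$, i.e.\ a homomorphism $c \colon G \to C_{k-2}$. Identify the colour set with $\{1, 2, \dots, k-2\}$, where $i$ is adjacent to $i+1$ for $1 \le i \le k-3$ and $k-2$ is adjacent to $1$.

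First I would colour the cycle $C$ by a folded homomorphism $C_k \to C_{k-2}$: put $c(v_i) = i$ for $1 \le i \le k-2$, $c(v_{k-1}) = k-3$, and $c(v_k) = k-2$. Consecutive vertices of $C$ then receive adjacent colours, the wrap-around edge $v_k v_1$ relying on the adjacency $\{k-2, 1\}$ of $C_{k-2}$. Next I would extend the colouring to $D$. For $v \in D_i$ the colour $c(v)$ must be a common $C_{k-2}$-neighbour of $c(v_i)$ and $c(v_{i+2})$. For $i \in \{1, \dots, k-4\}$ these two colours differ by $2$, forcing the unique choice $c(v) = i+1$. At the folded indices $i \in \{k-3, k-2\}$, where $v_i$ and $v_{i+2}$ share a colour, two options are available and I would fix $c(v) = k-4$ on $D_{k-3}$ and $c(v) = k-3$ on $D_{k-2}$. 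For $i = k-1$ and $i = k$ the common neighbour is again unique, equal to $k-2$ and $1$, respectively.

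Next I would check the $D$-$D$ edges. By the previous lemma every such edge joins $D_i$ with $D_{i \pm 1}$ or with $D_{i \pm 3}$; the hypothesis excludes the $\pm 3$ case, so it suffices to verify that the colours assigned to $D_i$ and $D_{i+1}$ are adjacent in $C_{k-2}$ for each $i$. Away from the boundary this reduces to the adjacency $(i+1) \sim (i+2)$; the few boundary pairs reduce to a short case check that uses, for $D_{k-1}$-$D_k$, the wrap-around edge $\{k-2, 1\}$ of $C_{k-2}$. Finally, vertices outside $S = C \cup D$ (recall $T$ is empty) remain. By the previous lemma every vertex of $D$ has all of its neighbours inside $S$, and vertices of $V(G) \setminus S$ have no neighbour in $C$ by definition, so no edge of $G$ crosses from $V(G) \setminus S$ to $S$. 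Each connected component of $G$ therefore lies entirely in $S$ and is already coloured, or entirely in $V(G) \setminus S$, in which case it can be coloured separately (the set is empty when $G$ is connected, and otherwise amenable to induction or to the $C_k$-free analysis carried out in the next subsection).

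The main obstacle is the bookkeeping at the folded indices $k-3, k-2, k-1, k$, where the naive rule $c(v) = i+1$ on $D_i$ breaks down and the colours of several nearby classes must be coordinated. It is exactly at this point that the hypothesis forbidding $D_i$-$D_{i+3}$ edges is used: a forbidden edge between $D_{k-3}$ and $D_k$, for instance, would require $k-4$ and $1$ to be adjacent in $C_{k-2}$, which fails for $k \ge 7$. Once this finite case analysis is carried out, the colouring of $C \cup D$ combines with the colourings of the remaining components to give the desired $C_{k-2}$-colouring of $G$.
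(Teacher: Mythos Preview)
Your argument is correct and follows the same overall plan as the paper---colour $C$ first, then extend to $D$---but the paper's execution is considerably cleaner. Rather than fixing one particular folded homomorphism $C_k \to C_{k-2}$ and then doing case analysis at the boundary indices, the paper observes that for \emph{any} $C_{k-2}$-colouring $c$ of $C$ one may simply set $c(v) \coloneqq c(v_{i+1})$ for every $v \in D_i$. This works uniformly: the edges from $v$ to $v_i$ and $v_{i+2}$ are fine because $c(v_{i+1})$ is adjacent in $C_{k-2}$ to both $c(v_i)$ and $c(v_{i+2})$, and a $D_i$--$D_{i+1}$ edge is fine because $c(v_{i+1})$ and $c(v_{i+2})$ are adjacent. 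No boundary bookkeeping is needed at all. (Under your chosen colouring of $C$ this rule would give $c(D_{k-3}) = c(v_{k-2}) = k-2$ instead of your $k-4$; both happen to work, but the uniform rule makes the verification one line.) On the other hand, you are more explicit than the paper about vertices outside $S$: you correctly note that with $T=\emptyset$ no edge joins $S$ to $V(G)\setminus S$, so the remaining components can be treated separately, whereas the paper silently assumes $G$ is connected and hence $G=S$.
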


\begin{proof}
The only edges of $G$ belong to $C$, connect a vertex from $D_i$ to vertices $i$ and $i+2$
of $C$ and connect vertices of type $D_i$ to vertices of type $D_{i+1}$.
In order to $C_{k-2}$-color $G$ we can choose arbitrary coloring of $C$ and
then color all vertices of $D_i$ by color of vertex $v_{i+1}$.
\end{proof}

From now on we assume that there exists a vertex of type $T$ or an edge between some vertex
in $D_i$ and $D_{i+3}$. In any $C_{k-2}$-coloring of $C$ there are exactly 2 colors
which are repeated twice and they are subsequent vertices of $C_{k-2}$.

\begin{lemma} \label{H0}
There exists a connected graph $H_0$, $H_0 \leq G$, $H_0$ consists of $C$ and at most $2$ other
vertices such that $H_0$ forbids all but $5$ $C_{k-2}$-colorings of $C$ in $G$.
\end{lemma}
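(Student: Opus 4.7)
The plan is to exploit the hypothesis stated just before the lemma: $G$ contains either a $T$-vertex or an edge $uw$ with $u\in D_i$ and $w\in D_{i+3}$ for some $i$. I would define $H_0$ as the subgraph of $G$ induced by $V(C)$ together with one or two carefully chosen extras. In the first case, include the $T$-vertex $v$ and, if necessary, one further strategically chosen extra vertex so as to stay within the budget of two extras. In the second case, include both endpoints $u,w$ of the edge.

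The counting argument will rest on the following parameterization. Every $C_{k-2}$-coloring of $C$ is a homomorphism $C_k \to C_{k-2}$, so writing $\epsilon_j = a_{j+1}-a_j \in \{-1,+1\}$ in $C_{k-2}$, the closure condition $\sum_j \epsilon_j \equiv 0 \pmod{k-2}$ together with the parity (both $k$ and $k-2$ are odd) forces exactly one $\epsilon_{j^*}$ to disagree with the other $k-1$ steps. Consequently each coloring is determined by the triple $(a_1, j^*, \text{direction})$, where $a_1$ is the starting color, $j^*$ is the reversal position, and the direction is that of the majority of steps; this gives a total of $2k(k-2)$ colorings.

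In Case~A, the $T$-vertex $v$ adjacent to $v_i$ and $v_{i+4}$ (and possibly $v_{i+2}$) requires $c(v)$ to be a common neighbor in $C_{k-2}$ of $a_i$ and $a_{i+4}$; equivalently, the partial walk between $a_i$ and $a_{i+4}$ must have length at most $2$ in $C_{k-2}$. This restricts $j^*$ to a window of four consecutive indices near $i$, and a second well-placed extra vertex (another $T$-vertex or the far endpoint of a nearby $D$-$D_{i+3}$ edge, whichever is available in $G$) shrinks the window further and, in some configurations, eliminates one of the two directions. In Case~B, the three constraints $c(u)\in N(a_i)\cap N(a_{i+2})$, $c(w)\in N(a_{i+3})\cap N(a_{i+5})$, and $c(u)\sim c(w)$ jointly pin down the local shape of the walk across positions $i,\ldots,i+5$, again leaving only a small number of compatible colorings.

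The main obstacle is the case analysis itself rather than any deep ingredient. One must separately treat $T$-vertices with $2$ versus $3$ neighbors on $C$ (the $3$-neighbor case being more constrained, through an additional adjacency requirement on $c(v)$), and in the $D_i$-$D_{i+3}$ edge case one must handle the sub-cases where adjacent cycle colors coincide, allowing $c(u)$ or $c(w)$ a second valid choice that could inflate the number of survivors. In each sub-case, a direct tally in the parameterization by $(a_1, j^*, \text{direction})$ reduces everything to an elementary count, which I expect to confirm the uniform bound of $5$ surviving colorings.
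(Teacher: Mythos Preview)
Your parameterization of the $C_{k-2}$-colorings of $C$ by $(a_1, j^*, \text{direction})$ is correct, and you correctly observe that a single $T_i$-vertex pins the backtrack position $j^*$ to a window of four indices, while a $D_i$--$D_{i+3}$ edge pins it to a window of five. This is exactly the mechanism the paper uses.

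The gap is your interpretation of ``five colorings''. The paper does \emph{not} mean five literal homomorphisms among the $2k(k-2)$ you enumerate; it means five colorings of $C$ up to automorphisms of $C_{k-2}$, i.e., five values of $j^*$. Two colorings with the same $j^*$ differ only by a rotation or reflection of the colour set and are therefore interchangeable for the purpose of extending to $G$; this is why, immediately after the lemma, the paper fixes the single representative $c(v_i)=i$, $c(v_{k-1})=1$, $c(v_k)=2$ and analyses only that one, and why the proof of Lemma~\ref{Ck} refers to ``these coloring types''. With this reading the lemma follows at once from your own observation: at most five values of $j^*$ survive, hence $H_0 = C \cup \{v\}$ in the $T$-case and $H_0 = C \cup \{u,w\}$ in the $D$-edge case suffice.

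Consequently your plan to introduce a \emph{second} extra vertex in Case~A is both unnecessary and unjustified: the standing hypothesis guarantees only the existence of \emph{one} $T$-vertex or \emph{one} $D_i$--$D_{i+3}$ edge, so the phrase ``whichever is available in $G$'' need not have any referent. Moreover, no number of extra vertices could ever bring the count of literal colorings down to five, since each surviving value of $j^*$ still carries $2(k-2)$ representatives. Once you replace ``five colorings'' by ``five coloring types'' your argument collapses to the paper's two-line proof: a $T_i$-vertex forces a repeated colour in the interval $[i,i+4]$, and a $D_i$--$D_{i+3}$ edge forces one in $[i,i+5]$.
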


\begin{proof}
Vertices from $D_i$ can be connected to vertices of $D_{i+3}$ only if a color is repeated
in coloring of $C$ in the interval $[i, i+5]$. Moreover, if we have a vertex of type $T_i$
a color must be repeated in coloring of $C$ in the interval $[i,i+4]$.
\end{proof}

We will now aim to prove Lemma \ref{fixedCol}.
Suppose we have a fixed $C_{k-2}$-coloring of $C$ in $G$. We fix $c(v_i) = i$
for $i \leq k-2$, $c(v_{k-1}) = 1$, $c(v_k) = 2$.

\begin{lemma}
If $T_i \neq \emptyset$ for $i \notin [k-3,k]$ or there exists an edge from $D_i$ to
$D_{i+3}$ for $i \notin [k-4,k]$ we cannot extend $C_{k-2}$-coloring of $C$ and we have a witness consisting
of $C$ and at most $2$ vertices.
\end{lemma}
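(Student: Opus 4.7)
The plan is to split the statement into two cases depending on whether the offending structure is a $T_i$-vertex with $i\notin[k-3,k]$ or an edge between $D_i$ and $D_{i+3}$ with $i\notin[k-4,k]$, and in each case to show directly from the fixed coloring $c(v_i)=i$ for $i\le k-2$, $c(v_{k-1})=1$, $c(v_k)=2$ that no extension exists. In both cases the resulting witness consists of $C$ together with one or two extra vertices, of size at most $k+2$.

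For the $T_i$-case, take $v\in T_i$ with $i\in\{1,\dots,k-4\}$. In case (iii) $v$ is adjacent to $v_i,v_{i+2},v_{i+4}$, and since $C_{k-2}$ is $2$-regular, $v$ can be properly colored only if the three induced colors share a common neighbour in $C_{k-2}$, which is impossible once the three colors are pairwise distinct. A case split over $i\in[1,k-6]$, $i=k-5$, $i=k-4$ (the last two exercising the wrap-around) shows the triples $\{i,i+2,i+4\}$, $\{k-5,k-3,1\}$, $\{k-4,k-2,2\}$ are always pairwise distinct in $C_{k-2}$ for $k\ge 7$, ruling out (iii). In case (ii) $v$ is adjacent only to $v_i$ and $v_{i+4}$, so $c(v)$ must be a common neighbour in $C_{k-2}$ of $c(v_i)$ and $c(v_{i+4})$; two vertices of $C_{k-2}$ have a common neighbour exactly when they lie at cyclic distance $2$. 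In each sub-range the pair $\{c(v_i),c(v_{i+4})\}$ is at cyclic distance $\min(4,k-6)$, which is $1$, $3$, or $4$ for odd $k\ge 7$, and never $2$; so $v$ has no admissible color, and $C\cup\{v\}$ is a witness of size $k+1$.

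For the edge case, let $u\in D_i$, $w\in D_{i+3}$ with $uw\in E(G)$ and $i\in\{1,\dots,k-5\}$. The two $C$-neighbours of $u$, namely $v_i$ and $v_{i+2}$, receive colors at cyclic distance exactly $2$ in $C_{k-2}$ (checked over the sub-cases $i\le k-4$ and $i=k-5$), so $c(u)$ is forced to be their unique common neighbour; the same reasoning forces $c(w)$. One obtains $c(u)=i+1$, $c(w)=i+4$ when $i\le k-7$, together with the explicit boundary values $c(u)=k-5,\ c(w)=k-2$ for $i=k-6$ and $c(u)=k-4,\ c(w)=1$ for $i=k-5$. A direct computation shows that in each case the two forced colors lie at cyclic distance $\min(3,k-5)\ge 2$ in $C_{k-2}$, so they are not adjacent, the edge $uw$ cannot be properly colored, and $C\cup\{u,w\}$ is a witness of size $k+2$.

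The main bookkeeping obstacle is the wrap-around: the coloring of $C$ fails to be the identity on $v_{k-1},v_k$, which creates the boundary sub-cases $i\in\{k-6,k-5,k-4\}$ that must be treated separately from the generic range. One must check carefully that the genuinely extendable configurations, which do occur when $i\in[k-3,k]$ (for $T$-vertices) or $i\in[k-4,k]$ (for $D_iD_{i+3}$-edges), never intrude into the forbidden ranges specified by the lemma; it is precisely these short cyclic-distance coincidences in $C_{k-2}$ that the hypothesis $i\notin[k-3,k]$, resp.\ $i\notin[k-4,k]$, rules out.
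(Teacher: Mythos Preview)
Your proof is correct. Both you and the paper reach the same conclusion by direct inspection of the fixed coloring, but the packaging differs. The paper's proof is a two-line appeal to the observation already made in the preceding lemma: a $T_i$-vertex (respectively, a $D_iD_{i+3}$-edge) can only be colored if some color is repeated on $C$ within the window $[i,i+4]$ (respectively, $[i,i+5]$); since the only repetitions in the fixed coloring occur at positions $\{1,k-1\}$ and $\{2,k\}$, the admissible windows are precisely $i\in[k-3,k]$ (respectively, $i\in[k-4,k]$), and anything outside yields the stated witness. You instead bypass the ``repeated color'' language and compute cyclic distances in $C_{k-2}$ directly, checking that in every sub-range the relevant pair $\{c(v_i),c(v_{i+4})\}$ sits at distance $\min(4,k-6)\neq 2$ and the relevant forced colors $c(u),c(w)$ sit at distance $\min(3,k-5)\ge 2$. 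Your computation is sound and the boundary cases are handled correctly; the paper's formulation is a bit more conceptual (and reuses the previous lemma), while yours is self-contained but carries more explicit bookkeeping.
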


\begin{proof}
This follows from the proof of Lemma \ref{H0}.
If $T_i \neq \emptyset$ for $i \notin [k-3,k]$ then the witness is a graph consisting of $C$
and any vertex from this $T_i$.
If there exists an edge $(u,v)$ from $D_i$ to $D_{i+3}$ and $i \notin [k-4,k]$ then
the witness is a graph consisting of $C$, $u$ and $v$.
\end{proof}

\begin{lemma} \label{typet}
Either all vertices of type $T$ and of type $D_i$ for $i \neq k-1,k$ have
exactly one possible color or we cannot extend $C_{k-2}$-coloring of $C$
to $T$ and $\cup_{i=1}^{k-2} D_i$
and there is a witness consisting of $C$ and at most $2$ vertices.
\end{lemma}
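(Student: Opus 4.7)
The plan is to proceed by a direct constraint analysis on each vertex of $T \cup \bigcup_{i=1}^{k-2} D_i$, relying on a single structural fact about $C_{k-2}$: since $k$ is odd and $k \geq 7$, the cycle $C_{k-2}$ has at least five vertices, so two distinct colours admit a common neighbour in $C_{k-2}$ if and only if their cyclic distance is exactly $2$, and in that case the common neighbour is unique. The consequence I will use throughout is that any vertex $v$ with two neighbours $u_1, u_2$ on $C$ is either forced to a unique colour (when $c(u_1)$ and $c(u_2)$ lie at cyclic distance $2$ in $C_{k-2}$), or admits no legal colour at all --- in which case $C \cup \{v\}$ is a witness with only one vertex beyond $C$.

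First I will handle $v \in D_i$ with $i \in \{1,\ldots,k-2\}$, where the neighbours $v_i$ and $v_{i+2}$ always receive colours at cyclic distance $2$ in $C_{k-2}$: when $i \leq k-4$ the colours are $i, i+2$ (distance $2$), and in the wrap-around cases $i = k-3$ and $i = k-2$ the pairs $(k-3, 1)$ and $(k-2, 2)$ again sit at distance $2$ modulo $k-2$. Hence every such vertex is forced to the unique common-neighbour colour, and no witness is required in this sub-analysis.

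Next I will carry out the analogous analysis for $v \in T_i$, now on $c(v_i)$ and $c(v_{i+4})$ with the index $i+4$ read modulo $k$. A short table shows that only the four ``wrap-around'' indices $i \in \{k-3, k-2, k-1, k\}$ produce pairs at cyclic distance $2$; for each of them I will read off the unique forced colour and verify that it is also adjacent in $C_{k-2}$ to $c(v_{i+2})$, so the forced colour stays consistent even when $v \in T_i$ has all three cycle neighbours. For every other $i$, the pair $(c(v_i), c(v_{i+4}))$ will turn out to be at cyclic distance strictly greater than $2$, so no valid colour exists and $C \cup \{v\}$ gives the small witness required by the second alternative of the lemma.

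The main obstacle is bookkeeping: the forced-colour computations split into roughly six subcases for $T_i$ and three for $D_i$ because of the wrap-around of the palette, and the consistency check with the third cycle neighbour in the $T_i$ case must be verified separately for each of the four ``good'' values of $i$. None of the arithmetic is deep, but the case analysis has to be exhaustive in order to conclude that the dichotomy in the lemma really is exhaustive and to make the ``unique forced colour'' table fully available for the arguments that follow.
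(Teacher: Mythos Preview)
Your vertex-by-vertex analysis is correct and in fact reproves the preceding lemma along the way (the paper has already disposed of $T_i$ with $i\notin\{k-3,\ldots,k\}$ before reaching this statement, which is why its own proof simply lists the forced colours $k-2,1,2,3$ for $T_{k-3},\ldots,T_k$ and $c(v_{i+1})$ for $D_i$). But you have omitted the one step that actually explains the phrase ``at most $2$ vertices'' in the statement, and without it the lemma is not proved in the sense needed downstream.

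Knowing that every vertex of $T\cup\bigcup_{i=1}^{k-2}D_i$ is forced to a unique colour \emph{by its neighbours on $C$} is not the same as knowing that the colouring of $C$ extends to all of these vertices. Two such vertices can be adjacent in $G$ while their forced colours are non-adjacent in $C_{k-2}$; for instance a vertex of $T_{k-3}$ (forced colour $k-2$) adjacent to a vertex of $D_2$ (forced colour $3$) is a conflict for every odd $k\ge 7$. In that situation the first alternative of the lemma, read as ``the colouring extends and each vertex receives a uniquely determined colour,'' fails, and the witness is $C$ together with the two endpoints of the offending edge. This edge check is precisely the last two sentences of the paper's proof and is the only source of a two-vertex witness; your plan never inspects edges inside $T\cup\bigcup_{i=1}^{k-2}D_i$, so the case analysis you describe is not exhaustive, and the next lemma (which assumes an actual colouring of $C\cup T\cup\bigcup_{i=1}^{k-2}D_i$ is already in hand) would not be reachable from your argument.
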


\begin{proof}
Vertices of $T_{k-3}$, $T_{k-2}$, $T_{k-1}$, $T_k$ must have colors respectively $k-2$, $1$, $2$, $3$.
All vertices in $D_i$ for $i \neq k-1,k$ must have color $c(v_{i+1})$.
If any edge between these vertices causes a conflict, we cannot extend the coloring of $C$.
A witness consists of $C$ and endpoints of this edge.
\end{proof}

\begin{lemma} \label{typed}
Either we can extend a $C_{k-2}$-coloring of $C$, $T$ and $\cup_{i=1}^{k-2} D_i$ to a $C_{k-2}$-coloring of $S$
or there exists a witness consisting of $C$ and at most $4$ vertices.
\end{lemma}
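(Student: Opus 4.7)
\emph{Proof plan.} The only vertices of $S$ still needing colours lie in $D_{k-1} \cup D_k$. My plan is to show that each such vertex has at most two candidate colours, determine when fixed-colour neighbours force a unique choice, and then argue that a consistent extension exists unless an explicit four-vertex conflict appears.

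First I would pin down the colour lists. Any $u \in D_{k-1}$ is adjacent in $C$ to $v_{k-1}$ and $v_1$, both coloured $1$, so $c(u)$ must lie in the $C_{k-2}$-neighbourhood of $1$, namely $\{2, k-2\}$. Symmetrically, $c(w) \in \{1, 3\}$ for every $w \in D_k$. From the preceding lemma describing $N(v)$ for $v \in D_i$ together with Lemma \ref{typet}, every other neighbour of $u$ has already been assigned a fixed colour, with the possible exception of neighbours in $D_k$; the same holds for $w$. In $C_{k-2}$ (using $k \geq 7$) colour $2$ is adjacent exactly to $\{1, 3\}$ and colour $k-2$ exactly to $\{k-3, 1\}$. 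Hence a fixed-colour neighbour $x$ of $u$ forces $c(u) = 2$ iff $c(x) = 3$, forces $c(u) = k-2$ iff $c(x) = k-3$, leaves the list $\{2, k-2\}$ unchanged iff $c(x) = 1$, and empties it otherwise. If the list becomes empty under a single $x$, or is cut to incompatible singletons by some $x_1, x_2$, we already obtain a witness $C \cup \{u, x\}$ or $C \cup \{u, x_1, x_2\}$ within the bound. The symmetric analysis for $D_k$ shows $c(w)$ is forced to $1$ by colour $k-2$ and to $3$ by colour $4$, with analogous small witnesses.

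Next, I would check compatibility across the $D_{k-1}$--$D_k$ edges, which are the only remaining edges inside $D_{k-1} \cup D_k$ since each $D_i$ is stable. Of the four pairs in $\{2, k-2\} \times \{1, 3\}$ exactly one, namely $(k-2, 3)$, is non-adjacent in $C_{k-2}$. So, assigning each list-restricted vertex its forced colour and each still-free vertex the default $c(u) = 2$ or $c(w) = 1$ satisfies every edge constraint, unless the $D_{k-1}$--$D_k$ bipartite subgraph contains an edge $uw$ with $c(u)$ forced to $k-2$ and $c(w)$ forced to $3$.

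In that last case, I would exhibit the witness $C \cup \{u, x, w, y\}$, where $x$ is a neighbour of $u$ with $c(x) = k-3$ and $y$ a neighbour of $w$ with $c(y) = 4$; by the forcing table $x$ must lie in $D_{k-4}$ and $y$ in $D_3$. On this induced subgraph the coloring of $C$ forces $c(u) = k-2$ and $c(w) = 3$, while the edge $uw$ requires them to be adjacent in $C_{k-2}$, a contradiction. The witness adds at most four vertices to $C$, as required. The main obstacle will be the case analysis in the first step: once the forcing table is pinned down, the default-extension argument and the four-vertex witness follow essentially immediately.
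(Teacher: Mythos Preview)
Your argument is correct and follows the same route as the paper's proof: identify the two-element lists $\{2,k-2\}$ and $\{1,3\}$ for $D_{k-1}$ and $D_k$, extract small witnesses when fixed-colour neighbours collapse a list, observe that $(k-2,3)$ is the unique non-adjacent pair so the default choice $2/1$ works except on a doubly-forced edge, and in that last case exhibit the four-vertex witness $\{u,x,w,y\}$. Your version is in fact more explicit than the paper's, in particular the forcing table and the identification $x\in D_{k-4}$, $y\in D_3$.
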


\begin{proof}
We need to color vertices of $D_{k-1}$ and $D_k$.
Vertices in $D_{k-1}$ can have color $k-2$ or $2$, vertices in $D_k$ can have
color $1$ or $3$. We take a vertex $v$ from $D_{k-1}$ or $D_k$. It may have neighbours
which cause a conflict of colors in $v$. Then we have a witness consisting of cycle $C$,
$v$ and 2 neighbours. Otherwise, each vertex is colored or still has two possible colors.
If two endpoints of edge $(u,v)$ connecting $D_{k-1}$ and $D_k$ have been colored by
$k-2$ and $3$ respectively, then we have a witness consisting of cycle $C$, $u$, $v$
one neighbour of $u$ and one neighbour of $v$ causing their colors.
If we did not find any conflicts, we can color all remaining vertices of $D_{k-1}$ by $2$
and all remaining vertices of $D_k$ by $1$, completing the coloring of $S$.
\end{proof}

\begin{lemma} \label{g-s}
Either we can extend a $C_{k-2}$-coloring of $C$, $T$ and $\cup_{i=1}^{k-2} D_i$ to a coloring of 
$G - \cup_{i=k-1}^k D_i$
or there exists a witness consisting of $C$ and at most $8$ vertices.
\end{lemma}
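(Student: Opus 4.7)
The plan is to process the vertices of $G - S$ one at a time and, for each, either extend the assumed partial coloring on $C \cup T \cup \bigcup_{i=1}^{k-2} D_i$ or build an induced witness, paralleling the case analyses of Lemmas \ref{typet} and \ref{typed}. Each vertex $u \in G - S$ lies outside $C \cup N(C)$ but must lie at distance at most $2$ from $C$ by $P_k$-freeness (otherwise an induced path from $u$ to $C$ could be extended along $C$ to length exceeding $k-1$). Its already-colored neighbors therefore lie entirely in $T \cup \bigcup_{i=1}^{k-2} D_i$, carrying colors fixed by Lemma \ref{typet}, and this yields a list $L(u) \subseteq \{1,\ldots,k-2\}$ of colors still available for $u$.

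If $L(u) = \emptyset$ for some $u$, I would use $P_k$-freeness to show that only a few neighbors of $u$ in $S$ (at most three) are needed to forbid every color in $\{1,\ldots,k-2\}$: any induced path that starts on $C$, passes through such a neighbor $w$ to $u$ and then exits through another neighbor $w'$ of $u$ back into $C$ must be shorter than $k-1$, which clusters the indices $\{i : N(u)\cap (D_i\cup T_i)\neq\emptyset\}$ within a short arc of the cycle and limits the distinct colors such neighbors can contribute. The witness is then $C$ together with $u$ and those neighbors, giving at most $4$ vertices beyond $C$.

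If instead some edge $uu'$ inside $G - S$ produces a color conflict (for example both lists collapse to the same singleton), I would form the witness as $C$ together with $u$, $u'$ and at most three constraining neighbors in $S$ for each of them, for a total of $2 + 3 + 3 = 8$ extra vertices, matching the bound of the statement. If neither obstruction is present, every list is nonempty and every edge in $G - S$ admits a compatible assignment, so the coloring extends greedily to $G - S$.

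The main obstacle is the bound of three constraining neighbors per vertex of $G - S$: this requires a careful case distinction according to the types ($D_i$ vs.\ $T_i$) and the relative positions along $C$ of the indices of $u$'s neighbors, using the same kind of induced-path arguments as in the earlier lemmas of this section to rule out long paths that would otherwise force $P_k$.
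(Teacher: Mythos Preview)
There is a genuine gap in the final step. You write that ``if neither obstruction is present, every list is nonempty and every edge in $G-S$ admits a compatible assignment, so the coloring extends greedily to $G-S$.'' This inference is not valid: nonempty lists plus pairwise compatibility on edges do not guarantee a global list-coloring (think of an odd cycle where every vertex carries the same two-element list). A component $M$ of $G-S$ may well have more than two vertices, and your one-vertex-at-a-time or one-edge-at-a-time check does not rule out a global obstruction inside $M$, nor does it produce a bounded-size witness when such a global obstruction occurs.

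The paper closes exactly this gap with a structural reduction you are missing. First, by Lemma~3.3 (``$N(v)\subseteq C\cup T\cup D_{i\pm1}\cup D_{i\pm3}$'' for $v\in D_i$), vertices of $G-S$ have their $S$-neighbours only in $T$, not in $D$; so your lists are determined solely by $T$-neighbours. Second, the paper proves that each component $M$ of $G-S$ is bipartite, and that for every path $xyz$ in $M$ the vertices $x$ and $z$ have the \emph{same} neighbourhood in $S$ (otherwise one builds an induced $P_k$ through $z,y,x$, a $T$-neighbour of $x$, and $k-4$ consecutive vertices of $C$). Consequently the two sides of the bipartition of $M$ have identical $S$-neighbourhoods, so $M$ is colorable if and only if a single edge $M'$ of $M$ is; this is what makes the witness bounded ($M'$ contributes two vertices, and three $T$-neighbours per endpoint suffice). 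Your proposal needs this ``homogeneous bipartite component'' lemma, or something equivalent, before any greedy argument can succeed.

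A smaller point: your distance-$2$ claim for vertices of $G-S$ is not established by the sketch you give (a $T$-vertex sees a block of five consecutive vertices on $C$, so extending along $C$ yields only $k-2$ vertices, not $k$); in the paper this plays no role precisely because the component-level reduction makes it unnecessary.
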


\begin{proof}
Now we consider vertices in $G - S$. Their neighbours in $S$ are of type $T$ only.
We do similar structural analysis as in \cite{3col}.
Let $M$ be a connected component of $G - S$. Suppose $M$ consists of
at least $2$ vertices. $M$ is bipartite
(otherwise we would have cycle $C'$ of length $k-2$ or $k$ in $G-S$ connected by some path $p_1..p_s$
to $C$, we would have induced $P_k$ $c'_3c'_2c'_1p_1..p_sc_i...c_{i+k-5}$). If $xyz$ is a path in $M$ then
$x$ and $z$ have the same neighbourhood in $S$ (otherwise let $u$ of type $T$ be a neighbour of $x$
not adjacent to $z$, $zyxuc_i...c_{i+k-5}$ is an induced $P_k$). So vertices in each of the two stable
sets in $M$ are adjacent to the same vertices in $S$. This means that we can color $M$ iff we can
color one edge of $M$ (all vertices in a stable set can be colored by the same color). We call this
edge $M'$. Otherwise, $M$ consists of one vertex and we define $M'=M$.
Now take an induced subgraph $H$ of $G$. $H$ consists of $C$, $M'$
and one vertex $v$ from each $T_i$ for each vertex $u$ of $M'$ if $u$ is adjacent to $v$.
If the coloring of $C$ extends to a coloring of $H$ and we have colored $S$ then we can
color $S \cup M$. If it holds for every component $M$, we have colored $G$. Otherwise,
the coloring of $C$ does not extend to a coloring of some $H$. $H$ is a witness which
consists of $C$ and at most 8 other vertices (it is enough to take neighbours from 3 sets $T_i$).
\end{proof}

\begin{lemma} \label{fixedCol}
A fixed $C_{k-2}$-coloring of $C$ either extends to a $C_{k-2}$-coloring of $G$ or there exists a connected witness
consisting of $C$ and at most 8 other vertices.
\end{lemma}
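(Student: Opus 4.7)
The plan is to chain together the preceding lemmas in the natural order in which one would try to build a coloring outward from $C$. Each step either extends the current partial coloring or exposes a small connected witness; the bound of $8$ in the statement will come from the costliest step, namely Lemma \ref{g-s}.

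First I would verify the global feasibility conditions isolated in the lemma immediately after Lemma \ref{H0}: that $T_i = \emptyset$ for $i \notin [k-3,k]$ and that there is no edge between $D_i$ and $D_{i+3}$ for $i \notin [k-4,k]$. A violation yields a witness consisting of $C$ together with at most $2$ vertices, each adjacent to $C$. Next, assuming feasibility, Lemma \ref{typet} forces a unique color on every vertex of $T$ and of $D_i$ with $i \neq k-1, k$; any edge between two such vertices that creates a conflict produces a witness of $C$ plus $2$ vertices. I would then apply Lemma \ref{typed} to extend the coloring to $D_{k-1}$ and $D_k$, completing a coloring of $S$ or producing a witness of $C$ plus at most $4$ vertices. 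Finally, Lemma \ref{g-s} handles each connected component of $G - S$ independently: since the neighbors of $G-S$ in $S$ lie only in $T$, the choice made in Lemma \ref{typed} does not interfere, and either the coloring extends or we obtain a witness of $C$ plus at most $8$ vertices.

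The main subtlety is checking uniformly that every witness produced along the way is a connected induced subgraph of $G$ of size at most $|C| + 8$. The size bound is dominated by Lemma \ref{g-s}. For connectedness, the extra vertices in the first three cases all lie in $N(C)$, so attaching them to $C$ keeps the subgraph connected. In the case of Lemma \ref{g-s}, the vertices of $M'$ are linked to $C$ through the $T_i$-neighbors included in the test graph $H$, and each such $T_i$-vertex is itself in $N(C)$, so $H$ is connected as well. Assembling these four cases yields the lemma.
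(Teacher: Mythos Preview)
Your proposal is correct and follows essentially the same route as the paper: the paper's proof simply cites Lemmas \ref{typet}, \ref{typed}, \ref{g-s} together with the observation that there are no edges between $G-S$ and $D$ (which is exactly your remark that the choices in Lemma \ref{typed} do not interfere with Lemma \ref{g-s}). Your additional explicit check of connectedness of the witnesses is a welcome elaboration that the paper leaves implicit.
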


\begin{proof}
This follows form Lemmas \ref{typet}, \ref{typed}, \ref{g-s} and the fact that there are no edges
between $G-S$ and $D$.
\end{proof}

\begin{lemma} \label{Ck}
Either we can $C_{k-2}$-color $G$ or $G$ contains a not $C_{k-2}$-colorable induced subgraph of size
at most $k+42$ containing $C_k$.
\end{lemma}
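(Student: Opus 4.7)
My plan is to combine Lemma~\ref{H0} with Lemma~\ref{fixedCol} by taking the union of the resulting witnesses. First, by Lemma~\ref{H0}, there exists an induced subgraph $H_0$ of $G$ consisting of $C$ plus at most two additional vertices that forbids all but five $C_{k-2}$-colorings of $C$ in $G$. Denote these five surviving colorings of $C$ by $c_1,\ldots,c_5$ (if fewer than five survive, the argument only improves).

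Next, I would apply Lemma~\ref{fixedCol} to each $c_i$ separately. Either $c_i$ extends to a full $C_{k-2}$-coloring of $G$, in which case $G$ itself is $C_{k-2}$-colorable and we are done, or there is a connected induced subgraph $W_i$ of $G$ consisting of $C$ together with at most $8$ additional vertices such that $c_i$ does not extend to $W_i$. If some $c_i$ extends, we output that coloring; otherwise, we obtain five witnesses $W_1,\ldots,W_5$ simultaneously as induced subgraphs of $G$.

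Now I would set $F$ to be the induced subgraph of $G$ on the vertex set $V(H_0) \cup V(W_1) \cup \cdots \cup V(W_5)$. All of these subgraphs share the $k$ vertices of $C$, so
\[
|V(F)| \le k + 2 + 5 \cdot 8 = k + 42,
\]
and $F$ contains the induced $C_k$. To see that $F$ is not $C_{k-2}$-colorable, observe that any $C_{k-2}$-coloring of $F$ would restrict to a $C_{k-2}$-coloring of the induced subgraph $H_0 \subseteq F$, and so the induced coloring of $C$ would be forced to be one of $c_1,\ldots,c_5$, say $c_i$; but since $W_i \subseteq F$ as an induced subgraph, the same coloring would also extend $c_i$ to $W_i$, contradicting the defining property of $W_i$.

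The argument is essentially a bookkeeping combination of the earlier lemmas, so there is no serious obstacle. The only point that requires care is making sure that passing from the separate witnesses to the single induced subgraph $F$ does not invalidate the non-extendability certificates: this is where I use that each $W_i$ sits inside $F$ as an induced subgraph (not merely as a subgraph), so that any $C_{k-2}$-coloring of $F$ still restricts to a proper $C_{k-2}$-coloring of each $W_i$.
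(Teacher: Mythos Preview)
Your proof is correct and follows essentially the same approach as the paper: combine the witness $H_0$ from Lemma~\ref{H0} with the five witnesses from Lemma~\ref{fixedCol} to obtain an induced subgraph on at most $k+2+5\cdot 8=k+42$ vertices containing $C$. The only minor omission is that Lemma~\ref{H0} is stated under the standing assumption that $G$ has a vertex of type $T$ or an edge between some $D_i$ and $D_{i+3}$; if this fails, Lemma~\ref{always} directly yields a $C_{k-2}$-coloring of $G$, which the paper notes in its first sentence.
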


\begin{proof}
$G$ can be $C_{k-2}$-colorable from Lemma \ref{always}. Otherwise, we can have two cases.
In the first case for one of the $5$ $C_{k-2}$-colorings of $C$ we have found an extension
to the coloring of $G$. In this case we can color $G$. In the second case
for each of these coloring types we have a witness consisting of $C$ and at most
$8$ other vertices from Lemma \ref{fixedCol}. We can combine the witnesses together with $H_0$
from Lemma \ref{H0} to obtain
a witness that $G$ is not $C_{k-2}$-colorable. This witness has at most $k+2+ 5*8 = k+42$ vertices and
contains $C$.
\end{proof}

\end{subsection}

\begin{subsection}{$G$ is $C_k$-free}

Let $C$ be a fixed cycle of length $k-2$ in $G$. Let $v_1,\ldots,v_{k-2}$ be
vertices of $C$. Let $c(v)$ denote the color of vertex $v$.
There is exactly one possible $C_{k-2}$-coloring of $C$.
We start from describing the neighbourhood of $C$.

\begin{lemma}
Let $v \in N(C)$. There are only two possibilities:
\begin{enumerate}[(i)]
\item $v$ has $2$ neighbours in $C$, $v_i$ and $v_{i+2}$ (we denote $v \in T_i$)
\item $v$ has $1$ neighbour in $C$, $v_i$ (we denote $v \in D_i$)
\end{enumerate}
for some $1 \leq i \leq k-2$ (see Figure \ref{picturek-2}).
\end{lemma}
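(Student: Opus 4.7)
My plan is to imitate the argument of the analogous lemma from the previous subsection, but exploiting the crucial parity difference that here $|C|=k-2$ is \emph{odd} rather than even. The only global property of $G$ I intend to invoke is the standing assumption of this section that every odd cycle of $G$ has length at least $k-2$; in particular $C_k$-freeness will not be needed for this structural lemma, nor will $P_k$-freeness, because $v\in N(C)$ already forces $v$ to have at least one neighbor on $C$.

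The main step will be to look at the cyclic sequence of neighbors $u_1,\dots,u_m$ of $v$ on $C$ together with the arc lengths $d_1,\dots,d_m$ of $C$ between consecutive ones, so that $\sum_{j} d_j = k-2$. Each consecutive pair $u_j, u_{j+1}$ closes up through $v$ into a cycle of length $d_j+2$, which is odd precisely when $d_j$ is odd; the odd-girth hypothesis then forces $d_j\geq k-4$ whenever $d_j$ is odd. Since $k-2$ is odd, at least one $d_j$ must be odd, and hence at least $k-4$, so the remaining arcs have lengths summing to at most $2$.

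From here a short case analysis on $m$ will finish the proof. If $m=1$, then $v$ has a unique neighbor $v_i$ and we are in case (ii), $v\in D_i$. If $m=2$, the two arc lengths sum to the odd number $k-2$, so they have opposite parity; the odd one is at least $k-4$, forcing the even one to be at most $2$, hence exactly $2$, which places $v$ in some $T_i$ by case (i). Finally, $m\geq 3$ will be ruled out: after removing one arc of length at least $k-4$, the remaining $m-1\geq 2$ arcs are each at least $1$ and sum to at most $2$, so at least one of them has length exactly $1$; but an arc of length $1$ means $v$ is adjacent to two consecutive vertices of $C$, producing a triangle and hence an odd cycle of length $3\leq k-4$ (using $k\geq 7$), contradicting the odd-girth assumption.

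The hard part is essentially bookkeeping: the whole argument is an arithmetic manipulation of the $d_j+2$ values against the threshold $k-2$, and the main thing to keep straight is the parity bookkeeping forced by $k-2$ being odd, which is exactly what makes the list of admissible neighbourhood types in this subsection shorter than in the previous one.
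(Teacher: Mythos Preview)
Your argument is correct and follows essentially the same route as the paper's proof: both analyze the arcs of $C$ between consecutive neighbours of $v$, use that their lengths sum to the odd number $k-2$ so that at least one arc is odd, and then invoke the standing odd-girth assumption (every odd cycle has length $\geq k-2$) to force that odd arc to have length at least $k-4$, leaving at most $2$ for the remaining arcs. The paper's version is terser---it first excludes consecutive neighbours (arcs of length $1$) via triangles and then implicitly bounds $m$ by noting the remaining arcs are each $\geq 2$---while you make the case split on $m\in\{1,2,\geq 3\}$ explicit, but the substance is the same; your observation that neither $P_k$-freeness nor $C_k$-freeness is actually used here is also accurate.
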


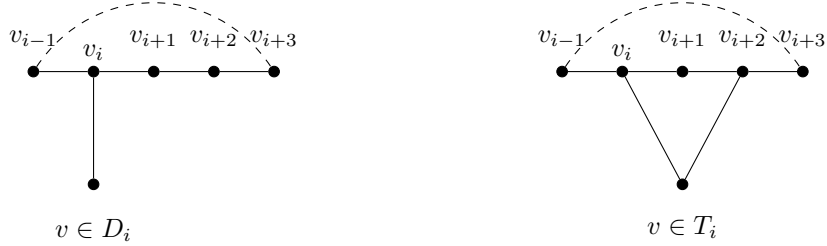
\begin{figure}
\centering
\begin{tikzpicture}
\begin{scope}
    \node (v1) [label=$v_{i-1}$] at (-1*0.8,0) {};
    \node (v2) [label=$v_{i  }$] at (0*0.8,0) {};
    \node (v3) [label=$v_{i+1}$] at (1*0.8,0) {};
    \node (v4) [label=$v_{i+2}$] at (2*0.8,0) {};
    \node (v5) [label=$v_{i+3}$] at (3*0.8,0) {};
    \node (v6) [label=below:$v \in D_i$] at (0*0.8,-1.5) {};
    \draw (v1) -- (v2);
    \draw (v2) -- (v3);
    \draw (v3) -- (v4);
    \draw (v4) -- (v5);
    \draw (v2) -- (v6);
    \draw [dashed] (v1) .. controls (0,1.2) and (2*0.8,1.2)  .. (v5);
\end{scope}
\begin{scope}[xshift=200]
    \node (v1) [label=$v_{i-1}$] at (-1*0.8,0) {};
    \node (v2) [label=$v_{i  }$] at (0*0.8,0) {};
    \node (v3) [label=$v_{i+1}$] at (1*0.8,0) {};
    \node (v4) [label=$v_{i+2}$] at (2*0.8,0) {};
    \node (v5) [label=$v_{i+3}$] at (3*0.8,0) {};
    \node (v6) [label=below:$v \in T_i$] at (1*0.8,-1.5) {};
    \draw (v1) -- (v2);
    \draw (v2) -- (v3);
    \draw (v3) -- (v4);
    \draw (v4) -- (v5);
    \draw (v2) -- (v6);
    \draw (v4) -- (v6);
    \draw [dashed] (v1) .. controls (0,1.2) and (2*0.8,1.2)  .. (v5);
\end{scope}
\end{tikzpicture}
\caption{Possible connections of $v \in N(C)$ to the cycle $C$ of length $k-2$. \label{picturek-2}}
\end{figure}

\begin{proof}
$v$ can have exactly one neighbour in $C$. 
$v$ cannot have two consecutive vertices of $C$ as neighbours since this would form a triangle.
Suppose $v$ is connected to some vertex $i$ and $i+t$ where $t$ is odd, $3 \leq t < k - 4$
and $v$ is not connected to $i+1, ..., i+t-1$. In this case we obtain an odd cycle of length smaller
than $k-2$. Notice that for every neighbour $v$ of $C$ there will be such $i$ and $t$ that
$v$ is connected to $i$ and $i+t$, $t$ is odd, $3 \leq t \leq k-4$ and $v$ is not connected to
$i+1,...,i+t-1$. So the only allowed value of $t$ is $k-4$. This means that neighbours of $v$
are in distance $(k-2) - (k-4) = 2$ in $C$.
\end{proof}

Let $D = \cup_{i=1}^{k-2} D_i$ and $T = \cup_{i=1}^{k-2} T_i$.
Let us denote by $S$ the set of vertices $S = C \cup D \cup T$.

\begin{lemma}
Let $v \in D_i$. $N(v) \subseteq C \cup T \cup D_{i\pm1} \cup D_{i\pm3}$. 
\end{lemma}

\begin{proof}
First of all $D_i$ is a stable set since otherwise we would have
a triangle. $v$ cannot be connected to $D_{i+2}$ since this would form $C_5$,
unless $k=7$ but then we have the case of $i+3$.
Vertices from $D_i$ cannot be connected to vertices of $D_{i+t}$ for
$4 \leq t \leq k - 6$ since this would give an odd cycle of length at most $k-4$.
\end{proof}

Let $S_1$ denote set of all vertices of type $T$, $C$, $D_i$ having edges to $D_{i\pm3}$ and all 
vertices of type $D$ influenced by neighbours of type $T$.

\begin{lemma} \label{colS1}
Either all vertices in $S_1$ have
exactly one possible color (determined by $C$ and at most $2$ other vertices)
or we cannot extend $C_{k-2}$-coloring of $C$ and there is a witness
consisting of $C$ and at most $4$ vertices.
\end{lemma}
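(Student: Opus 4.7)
The plan is to walk through each class of vertex in $S_1$ and show its color is uniquely forced, or else extract a small conflict witness, using that $C$ admits the unique $C_{k-2}$-coloring $c(v_i) = i$ (indices mod $k-2$). Vertices of $C$ itself are fixed by $C$. For $v \in T_i$, the two $C$-neighbors $v_i, v_{i+2}$ force $c(v)$ to lie in $N_{C_{k-2}}(i) \cap N_{C_{k-2}}(i+2) = \{i+1\}$ (a singleton since $k-2 \geq 5$), so $c(v) = i+1$ is determined by $C$ alone with no extra vertices.

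For a $D$-vertex $u \in D_i$, the $C$-constraint is only $c(u) \in \{i-1, i+1\}$, so $u$ can enter $S_1$ only through a forcing neighbor, and there are two subcases. (a) If $u$ has a neighbor $w \in D_{i+3}$ (the case $D_{i-3}$ being symmetric), I would combine $c(u) \in \{i-1, i+1\}$ with $c(w) \in \{i+2, i+4\}$ and check which of the four pairs forms an edge of $C_{k-2}$; only $(c(u), c(w)) = (i+1, i+2)$ survives, so $c(u)$ is determined by $C$ and the single extra vertex $w$. (b) If $u$ has a neighbor $v \in T_j$, then $c(v) = j+1$ is already pinned, so $c(u) \in \{j, j+2\} \cap \{i-1, i+1\}$. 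The earlier neighborhood analysis (no $P_k$, no triangles, no short odd cycles) forces $j \in \{i-3, i-1, i+1\}$; the cases $j = i-3$ and $j = i+1$ give singleton intersections that force $c(u) = i-1$ and $c(u) = i+1$ respectively (each determined by $C$ and the single vertex $v$), while $j = i-1$ leaves both options open, so such a $u$ only enters $S_1$ via an additional forcing neighbor of type (a) or (b) which is what then certifies its color.

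In the conflicting case, if a single vertex $u \in S_1$ receives two incompatible forced colors then the witness is $C$ together with $u$ and the two forcing neighbors, giving at most $|C| + 3$ vertices. If instead the conflict is on an edge $uu'$ whose two forced endpoint colors are non-adjacent in $C_{k-2}$, the witness is $C$ together with $u$, $u'$, and one certifier per endpoint; in the worst case this totals $|C| + 4$ vertices (fewer when a certifier coincides with one of $u, u'$ or already lies in $C$, e.g. when $u' \in D_{i\pm3}$ is itself the certifier of $u$). The main obstacle is the case analysis in (b): using the earlier structural lemma on $N(C)$ to restrict the admissible values of $j$ and to verify the singleton intersections. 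Once that is in place every conflict witness fits within $|C| + 4$ vertices, since it involves at most two $S_1$-vertices and at most one extra certifier per endpoint.
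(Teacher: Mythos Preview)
Your proposal is correct and follows the paper's approach: color each $T_i$ by $i+1$, force the $D$-vertices via a $T$-neighbor or a $D_{i\pm3}$-neighbor, and output a witness of size at most $|C|+4$ on conflict --- you simply supply more detail than the paper's terse proof, which handles the $T$-neighbor case in a single phrase (``if they influence their color''). One caveat: your structural claim that a $T_j$-neighbor of a $D_i$-vertex must have $j \in \{i-3,i-1,i+1\}$ is not directly given by the earlier lemma on $N(D_i)$ (which only constrains the $D$-part of the neighborhood), but this is harmless for the argument, since any other $j$ makes $\{j,j+2\}\cap\{i-1,i+1\}$ empty and thus already yields an immediate witness $C\cup\{u,v\}$ well within the $|C|+4$ bound.
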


\begin{proof}
We start from coloring vertices in $T_i$ by $i+1$.
Next we color vertices of type $D$ connected to vertices of type $T$ if they influence
their color.
Next we color vertices which are endpoints of edge connecting
$D_i$ to $D_{i+3}$ in the only possible way, their colors are $i+1$ and $i+2$.
If there was any conflict, we have a witness consisting of $C$ and at most $4$
other vertices.
Otherwise, color of each vertex is determined by $C$, himself and at most
1 other vertex.
\end{proof}

\begin{lemma} \label{colrest}
Either we can extend a $C_{k-2}$-coloring of $C \cup T$ to all components of $G-S$ having at least $2$
vertices or we have a witness of not $C_{k-2}$-colorability consisting of $C$ and at most 8 other
vertices.
\end{lemma}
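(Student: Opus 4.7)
The plan is to mimic the argument of Lemma \ref{g-s}, adapted to the present setting where $G$ is $C_k$-free and the reference cycle $C$ has length $k-2$. I would consider a connected component $M$ of $G-S$ with $|V(M)|\geq 2$, and aim to show that either the fixed coloring of $C\cup T$ extends to $M$, or an induced subgraph of $G$ consisting of $C$ together with at most $8$ further vertices already fails to be $C_{k-2}$-colorable.

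First I would establish two structural facts about $M$. \emph{(1) $M$ is bipartite}: any odd cycle in $G$ has length $k-2$ or $k$, and $G$ is $C_k$-free, so an odd cycle $C'\subseteq M$ would have length exactly $k-2$; three consecutive vertices of $C'$ together with a shortest path from $C'$ to $C$ and a suitable induced arc of $C$ then yield an induced $P_k$ in $G$, contradicting $P_k$-freeness (the vertex count is the same as in Lemma \ref{g-s}). \emph{(2) If $xyz$ is an induced path in $M$, then $x$ and $z$ share the same $T$-neighborhood}: otherwise some $u\in T_i$ is adjacent to $x$ but not to $z$, and $z\,y\,x\,u\,v_i\,v_{i-1}\,\ldots\,v_{i+3}$---traversing $C$ the long way from $v_i$ to $v_{i+3}$ to avoid the other $C$-neighbor $v_{i+2}$ of $u$---is an induced path on $3+1+(k-4)=k$ vertices, a contradiction. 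Combining (1) and (2), both sides of the bipartition of $M$ have constant $T$-neighborhoods, so extending the coloring to all of $M$ reduces to coloring a single edge $M'\subseteq M$ consistently with $T$.

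Finally, I would form the candidate witness $H=G[V(C)\cup V(M')\cup R]$, where $R$ contains one representative from each $T_i$-class that meets $N(V(M'))$. If the coloring of $C\cup T$ extends to $H$, then by the previous paragraph it extends to all of $M$; otherwise $H$ is itself a non-$C_{k-2}$-colorable induced subgraph and serves as the witness. The main remaining obstacle is to bound $|R|\le 6$: I would argue that each endpoint of $M'$ has $T$-neighbors in at most three different $T_i$-classes, because a pair of $T$-neighbors of the same vertex sitting in classes with far-apart indices would combine with a suitable induced arc of $C$ into yet another induced $P_k$. Together with $|V(M')|=2$, this yields at most $8$ vertices beyond $C$, as claimed.
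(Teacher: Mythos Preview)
Your outline follows the paper's proof almost verbatim: reduce a large component $M$ of $G-S$ to a single edge $M'$ via bipartiteness and the ``same neighbourhood along a $P_3$'' argument, then build the witness $H$ from $C$, $M'$, and a few $T$-representatives. (The paper additionally records that such an $M$ has no neighbours in $D$; you skip this, but it is not needed for the lemma as stated --- it is only used later, in Lemma~\ref{Ck-2}.)

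The one genuine soft spot is your justification for $|R|\le 6$. You claim a \emph{structural} bound: each endpoint of $M'$ can have $T$-neighbours in at most three $T_i$-classes, because two $T$-neighbours with far-apart indices, together with an arc of $C$, would produce an induced $P_k$. This is not how the paper argues, and it is not clear that your sketch goes through. If $w\in G-S$ is adjacent to $u\in T_i$ and $v\in T_j$, then each of $u,v$ has \emph{two} attachment points on $C$ (namely $v_i,v_{i+2}$ and $v_j,v_{j+2}$), and these second attachments typically create chords that spoil any attempted induced $P_k$ built from $w,u,v$ and an arc of $C$; an arc long enough to reach $k$ vertices will contain one of those second attachments. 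So the structural claim ``at most three $T_i$-classes'' is, at best, nontrivial to prove along the line you indicate.

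The paper's reason why three representatives per endpoint suffice is a \emph{colouring} argument, not a structural one. Every vertex of $T_i$ is forced to colour $i{+}1$, so the set of admissible colours for an endpoint $w$ of $M'$ is $\bigcap_{i\in I}\{i,i{+}2\}$, where $I$ is the set of indices $i$ with $N(w)\cap T_i\neq\emptyset$. For any such family, a subfamily of size at most three already realises the same intersection (empty or a singleton as soon as two classes with $|i-j|\neq 2$ are present, and a singleton from two classes at distance~$2$); hence three $T$-representatives per endpoint capture all the constraints, regardless of how many $T_i$-classes $w$ actually meets. Replacing your structural sentence with this colouring observation closes the gap and matches the paper's count of ``$C$ plus at most $8$''.
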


\begin{proof}
Let $M$ denote a component of $G-S$ having at least $2$ vertices.
$M$ cannot have a neighbour of type $D$ because we would have a triangle or $P_k$.
Neighbours of this component in $S$ are of type $T$.
We again do similar structural analysis as in \cite{3col}. $M$ is bipartite
(otherwise we would have a cycle $C'$ of length $k-2$ in $G-S$ connected by some path $p_1..p_s$
to $C$, we would have induced $P_k$ $c'_3c'_2c'_1p_1..p_sc_i...c_{i+k-5}$). If $xyz$ is a path in $M$ then
$x$ and $z$ have the same neighbourhood in $S$ (otherwise let $u$ of type $T$ be a neighbour of $x$
not adjacent to $z$, $zyxuc_i...c_{i+k-5}$ is an induced $P_k$). So vertices in each of the two stable
sets in $M$ are adjacent to the same vertices in $S$. This means that we can color $M$ iff we can
color one edge of $M$ (all vertices in a stable set can be colored by the same color). We call this
edge $M'$.
Now take an induced subgraph $H$ of $G$. $H$ consists of $C$, $M'$
and one vertex $v$ from each $T_i$ for each vertex $u$ of $M'$ if $u$ is adjacent to $v$.
If a $C_{k-2}$-coloring of $C$ extends to a $C_{k-2}$-coloring of $H$ and we have colored $C \cup T$ then we can
color $C \cup T \cup M$. If it holds for every component $M$, we have colored $C$, $T$ and
all components of $G-S$ of size at least $2$. Otherwise,
the $C_{k-2}$-coloring of $C$ does not extend to $C_{k-2}$-coloring of some $H$. $H$ is a witness which
consists of $C$ and at most 8 other vertices (it is enough to take neighbours from 3 sets $T_i$).
\end{proof}

Let $S_2$ denote set of all vertices in $S_1$ and all 
vertices of type $D$ influenced by single vertex components in $G-S$.

\begin{lemma} \label{colS2}
Either all vertices in $S_2$ have exactly one possible color (determined by $C$ and at most $4$
other vertices) or we cannot extend the $C_{k-2}$-coloring of $S_1$ and there is a witness consisting of $C$ and at most
$8$ vertices. Moreover, the $C_{k-2}$-coloring of $S_2$ extends to $C_{k-2}$-coloring of $S$ iff the $C_{k-2}$-coloring of $S_2$ extends to
$C_{k-2}$-coloring of $S$ and single vertex components of $G-S$.
\end{lemma}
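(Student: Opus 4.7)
The plan is to extend the partial colouring of $S_1$ produced by Lemma~\ref{colS1} by incorporating the constraints coming from single-vertex components of $G-S$. Fix such a component $\{w\}$: since the neighbourhood of any $v\in D_i$ lies inside $S$, $w$ can have no $D$-neighbour, so $N(w)\cap S\subseteq T$. All $T$-vertices are already coloured in the $S_1$ stage, so the set $L(w)$ of colours still admissible for $w$ is completely determined by the $T$-neighbours of $w$. If $L(w)=\emptyset$, two of those $T$-neighbours produce the conflict, and $C$ together with $w$ and those two vertices forms a witness of size $|C|+3$, safely inside the claimed ``at most $8$ extra vertices'' bound.

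Next I would formalise the \emph{influence} of $\{w\}$ on a $D_i$-vertex $u\notin S_1$: such $u$ still has the two candidate colours $\{i-1,i+1\}$ coming from its unique $C$-neighbour $v_i$, and $\{w\}$ influences $u$ exactly when combining $L(w)$ with the colours already fixed in $S_1$ leaves only one of these two candidates admissible. Since the only channel linking the constraints on $w$ to those on $u$ passes through $C$ and the already fixed $T$-vertices, the forced colour of $u$ is exhibited by $C$ together with $u$, $w$, and at most two intermediary $T$-vertices, giving the ``at most $4$ extra vertices'' bound for the unique colour. If two such influences on the same $u$ disagree, the two certificates combine into a single witness of at most $8$ extra vertices, as claimed.

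For the ``moreover'' clause, I would observe that once $S_2$ has been consistently coloured, every remaining vertex of $D$ sees its entire neighbourhood inside $S$, and every single-vertex component $\{w\}$ sees its entire neighbourhood inside $T\subseteq S_1\subseteq S_2$; hence completing the colouring of the leftover part of $S$ and completing the colouring of the single-vertex components of $G-S$ become independent subproblems once $S_2$ is fixed, which is precisely the stated equivalence.

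The main obstacle I anticipate is ensuring that the ``influence'' relation does not propagate through a long chain of $D$-vertices and thereby blow up the witness size; however, because $T$ is already fully coloured and the neighbours of any $v\in D_i$ are confined to $C\cup T\cup D_{i\pm1}\cup D_{i\pm3}$, any such chain collapses into a bounded-length dependency — handled analogously to the $D_i$--$D_{i\pm3}$ edges treated in Lemma~\ref{colS1} — which keeps the witness sizes at $4$ and $8$ respectively.
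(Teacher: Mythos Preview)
Your argument rests on the claim that a single-vertex component $\{w\}$ of $G-S$ satisfies $N(w)\cap S\subseteq T$, which you justify by saying that $N(v)\subseteq S$ for every $v\in D_i$. That inference does not hold in the $C_k$-free setting of this section. The earlier lemma on $D_i$-vertices, as \emph{proved}, only restricts which sets $D_j$ a vertex of $D_i$ may touch; it does not exclude edges from $D_i$ into $G-S$. Indeed, if $v\in D_i$ has a neighbour $w\in G-S$, the obvious attempt $w\,v\,v_i\,v_{i+1}\cdots$ yields only an induced $P_{k-1}$, not a $P_k$, so there is no contradiction. This is exactly why Lemma~\ref{colrest} had to argue separately (using a second vertex of the component) that components of $G-S$ of size at least two have no $D$-neighbours; single-vertex components enjoy no such restriction.

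This matters because it is precisely the adjacency of $w$ to $D$-vertices that creates the ``influence'' in the definition of $S_2$. In your model, $w$ sees only $T$-vertices whose colours are already fixed, so $w$ cannot force the colour of any $D$-vertex at all, and you would get $S_2=S_1$; the lemma then says nothing new. The paper's proof instead performs a case analysis on the pattern of $T$- and $D$-neighbours of $w$ (neighbours in two $T_i$'s, in one $T_i$ and various $D_j$'s, or in $D$ only), shows that in each admissible pattern $w$ receives a forced colour which in turn pins down the colours of its $D$-neighbours, and checks that every inadmissible pattern yields a small witness. That case analysis is the actual content of the lemma, and your outline does not reach it.
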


\begin{proof}
Let $v$ be a single vertex component of $G-S$.

If a vertex $v$ from $G-S$ has neighbours in $T_i$ and $T_j$ where
$i \neq j + 2$ and $j \neq i+2$, then $v$ cannot be colored. To ensure this we forbid some graphs
consisting of $C$ and 3 other vertices.

If $v$ has neighbours in $T_i$ and $T_{i+2}$, other neighbours of $v$ can belong only to
$D_i$, $D_{i+2}$ or $D_{i+4}$. We can forbid other configurations by graphs containing $C$
and 4 other vertices. Otherwise, we color $v$ by $i+2$, neighbours from $D_i$ by $i+1$,
neighbors from $D_{i+4}$ by $i+3$. Neighbuors from $D_{i+2}$ can have arbitrary color
from $i+1, i+3$. Color of such vertex in $D_i$ and $D_{i+4}$ results from $C$, himself
and 3 other vertices.

Further $v$ can have neighbours in $T_i$ only of type $T$,
other neighbours of $v$ can belong only to $D_{i-2}$, $D_i$, $D_{i+2}$, $D_{i+4}$.
We can forbid other configurations by graphs containing $C$ and 3 other vertices.
$v$ cannot have neighbours both in $D_{i-2}$ and $D_{i+4}$. We can forbid it by a graph consisting
of $C$ and 4 other vertices. If $v$ has neighbours in one of these sets, say $D_{i-2}$,
then we color $v$ to $i$, neighbours in $D_{i-2}$ to $i-1$, neighbours in $D_{i+2}$ to
$i+1$. Neighbours from $D_i$ can have arbitrary color from $i-1,i+1$. Color of such vertex
in $D_{i-2}, D_{i+2}$ results from $C$, himself and 3 other vertices.
The other case is symmetric. Suppose $v$ has neighbours only in $D_i$ and $D_{i+2}$.
$v$ cannot have neighbours in both of these sets, since we would get $C_k$. If $v$ has neighbours
only in $D_i$ we can always color $v$ by $i$. The other case is symmetric.

Further $v$ can have neighbours of type $D$ only. $v$ can have only neighbours
in $D_i$ and $D_{i+4}$ for some $i$. $v$ cannot have neighbours in $D_{i+1}$ because we would
have $C_5$, and in case of $k=7$ this corresponds to case $i+4$. $v$ cannot have neighbours
in $D_{i+2}$ since this would give $C_k$. $v$ cannot have neighbours in $D_{i+3}$ since
we would have $C_7$, and in case of $k=9$ this corresponds to case $i+4$. $v$ cannot have neighbours
in $D_t$ for $5 \leq t$ since we would not be able to color $v$ any more. We can forbid
this situation by graphs consisting of $C$ and 3 other vertices. In case of $v$ connected
to $D_i$ and $D_{i+4}$ we color $v$ to $i+2$, neighbors of $v$ in $D_i$ to $i+1$, neighbours
of $v$ in $D_{i+4}$ to $i+3$. Color of such vertex of type $D$ results from $C$, himself
and 2 other vertices. If $v$ has neighbours only of type $D_i$ we can always color $v$ to $i$.

When applying the rules above when coloring $v$ we could have
a conflict or not. If we have a conflict, color of some vertex $v$ of type $D_i$ can be a problem,
then we have a witness which is a sum of graphs forcing color of $v$. Such graph consists
of $C$, $v$ and at most 6 other vertices. Conflict can happen on some edge between two vertices
of type $D$. In the same way we have a witness consisting of $C$ and at most 8 other vertices.
\end{proof}

To sum up our current state. We either found a forbidden graph consisting of $C$ and at most 8
other vertices or we have a partial $C_{k-2}$-coloring. If there exists any $C_{k-2}$-coloring of $G$ then there
exists a $C_{k-2}$-coloring of $G$ extending our $C_{k-2}$-coloring. We have colored vertices of $C$, all vertices
of type $T$, all vertices of $G-S$, all vertices of type $D_i$ having connection to vertex
of type $D$ other than $D_{i-1}$ or $D_{i+1}$, all vertices of type $D$ that can be influenced
by colors in $G-S$, all vertices of type $D$ that can be influenced by vertices of type $T$.
This means that we have left to color only some vertices of type $D$ and the only edges we need
to care for connect $D_i$ to $D_{i+1}$.

\begin{lemma} \label{colS}
Either we can extend the $C_{k-2}$-coloring of $S_2$ to $C_{k-2}$-coloring of $S$ or there is a witness
consisting of $C$ and at most $2k+4$ other vertices in $G$.
\end{lemma}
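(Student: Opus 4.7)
The plan is to translate the remaining task into a simple constraint-satisfaction problem. At this stage every uncolored vertex lies in some $D_i$ and has exactly two options, $i-1$ (call it \emph{left}) or $i+1$ (\emph{right}); for an edge $uv$ with $u\in D_i$, $v\in D_{i+1}$ the unique forbidden pair is $(c(u),c(v))=(i-1,i+2)$. In particular, coloring every uncolored vertex \emph{right} already satisfies all such edges, so the only way the $S_2$-coloring can fail to extend is via the propagation rule \emph{left propagates forward in the level cycle, right backward}. Equivalently, extension fails iff there is a path $u_0 u_1 \cdots u_m$ in $G$ with $u_t \in D_{i_0+t \bmod (k-2)}$, consecutive vertices adjacent, $u_0\in S_2$ fixed left, and $u_m\in S_2$ fixed right.

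First I would either color $S$ by iterated propagation, or pick a shortest such chain. The witness will be $C$ together with this chain and the auxiliary vertices from Lemma~\ref{colS2} certifying the fixings of $u_0$ and $u_m$. Since each endpoint fixing contributes at most $3$ new vertices beyond the endpoint itself, the witness has at most $(k-2)+(m+1)+6$ vertices, and it suffices to prove $m+1 \le 2(k-2)$.

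The heart of the argument, and the main obstacle, is bounding $m$. If $m+1>2(k-2)$, then by pigeonhole some level $D_j$ contains at least three chain vertices $w_{t_1},w_{t_2},w_{t_3}$ with $t_2-t_1 = t_3-t_2 = k-2$. At most two positions in the chain are endpoints, so one of the two candidate $k$-cycles $w_{t_1}\cdots w_{t_2} v_j w_{t_1}$ and $w_{t_2}\cdots w_{t_3} v_j w_{t_2}$ has both of its level-$j$ vertices interior. A chord in this cycle would have to be a $D_{\pm 3}$-edge (impossible, since interior chain vertices are not in $S_1$) or a $D_{\pm 1}$-edge not already on the cycle (impossible by the adjacency structure of $D$, together with the fact that $v_j$ has no $C\cup D$-neighbor outside $D_j\cup\{v_{j\pm 1}\}$). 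Hence the cycle is an induced $C_k$ in $G$, contradicting the $C_k$-freeness hypothesis of this subsection.

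The delicate sub-case is when $u_0$ and $u_m$ both lie in the same triply-hit level; this forces $m$ to be a multiple of $k-2$, and neither candidate $k$-cycle has both level-$j$ vertices interior. Here I would apply $P_k$-freeness to the sub-path $u_0 u_1 \cdots u_{k-1}$: it has $k$ vertices and hence must contain a chord, and a direct enumeration of the allowed $D$-edges shows the only possibility is $u_0 u_{k-3}$; the symmetric argument at the other endpoint yields $u_m u_{m-k+3}$. Combining these two chords with the corresponding arcs of the level cycle and of $C$ yields either an induced $C_k$ in $G$ or an immediate color conflict, in either case contradicting the assumption. Granting $m+1\le 2(k-2)$, the witness has at most $2(k-2)+6 \le 2k+4$ vertices outside $C$, as claimed.
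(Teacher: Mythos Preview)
Your framework matches the paper's: both reduce to propagating the two choices (``small/left'' $=i-1$ and ``big/right'' $=i+1$) along $D_i$--$D_{i+1}$ edges, and both assemble the witness from the propagation chain(s) together with the gadgets certifying the fixed endpoints. The paper, however, bounds each \emph{single-direction} propagation chain separately by $k-2$ steps: after $k-2$ steps one returns to the same level at a distinct vertex, and one more step produces an explicit induced $P_k$ once a few named edges are checked to be absent (each would create an induced $C_k$, or an induced $C_{k-2}$ inside $D$; the latter is excluded because such a cycle necessarily visits every $D_i$ exactly once and can then be rerouted through three consecutive vertices of $C$ to give a $C_k$). This per-direction bound gives the two paths of length at most $k-2$ that make up the $2(k-2)+8$ count, and it never needs your delicate sub-case.

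The genuine gap in your write-up is precisely that sub-case. Your claim that ``a direct enumeration of the allowed $D$-edges shows the only possibility is $u_0u_{k-3}$'' is not correct: on $u_0u_1\cdots u_{k-1}$ the potential chords compatible with the level structure also include $u_0u_{k-1}$ (a $D_j$--$D_{j+1}$ edge), $u_0u_3$ and $u_0u_{k-5}$ (both $D_{\pm 3}$ edges, which are \emph{not} forbidden for $u_0\in S_2$), and the interior $D_{\pm 1}$ chords $u_1u_{k-2}$ and $u_2u_{k-1}$. The last two are excluded only via the ``no induced $C_{k-2}$ in $D$'' observation---the paper's first step---which you never state; the same observation is also what is really doing the work when you dismiss the $D_{\pm 1}$ chords in your main $C_k$ argument (``impossible by the adjacency structure of $D$'' is not enough). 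If you want to keep the combined-chain approach, you need to (i) state and prove the no-$C_{k-2}$-in-$D$ fact, and (ii) redo the chord analysis in the boundary case $m=2(k-2)$ carefully, handling each of the surviving chords. Alternatively, adopting the paper's per-direction bound eliminates the sub-case altogether.
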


\begin{proof}
Graph induced by vertices of type $D$ contains no induced $C_{k-2}$. If it contained one,
it would have one vertex from each set $D_i$. By removing one of these vertices and taking 3
vertices of $C$ we would get $C_k$.

Each vertex of $D_i$ has two possible colors $i-1$ and $i+1$. We will call them "small" and "big".
If $v$ uses the small color and $u \in D_{i+1}$ then $u$ also has to use
the small color. If $u$ uses the big color then $v$ also has to use the big color.
In one step procedure we can color all vertices that have neighbours as above.
If we do not have any conflicts we will end in at most $k-2$ steps. From one vertex
we can expand only in one direction. Starting from $a \in D_i$ after $k-2$ steps we
get to $b \in D_i$ ($a \neq b$ since we have no $C_{k-2}$). We have induced $P_{k-1}$
since there are no crossing edges among not colored vertices. Suppose we make one
more step to $c \in D_{i+1}$. Let $d \in D_{i+1}$ be the neighbour of $a$ on
our path and $e \in D_{i+2}$ be the neighbour of $d$.
We cannot have edge $(a,c)$,$(b,d)$ or $(c,e)$ because we would have $C_k$ or $C_{k-2}$,
so we obtain $P_k$.

Suppose this procedure is finished without conflicts after at most $k-2$ steps.
We can color all remaining vertices for example by small color and obtain coloring of $G$.
Otherwise, we get some conflict in vertex $v$ or edge $(u,v)$. We pick
vertices $a$ and $b$ from which we started paths leading to a conflict. Colors of $a$ and $b$
are defined by graphs consisting of $C$ and at most 4 other vertices each. Together with two paths
of length at most $k-2$ they form our witness. This witness consists of $C$ and at most
$2(k-2) + 8 = 2k + 4$ other vertices, altogether $3k + 2$ vertices. 
\end{proof}

\begin{lemma} \label{Ck-2}
Either we can $C_{k-2}$-color $G$ or $G$ contains a not $C_{k-2}$-colorable induced subgraph
of size at most $3k + 2$ containing $C_{k-2}$.
\end{lemma}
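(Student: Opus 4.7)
The plan is to assemble Lemmas \ref{colS1}, \ref{colrest}, \ref{colS2}, and \ref{colS} into a single case analysis. Fix an induced cycle $C$ of length $k-2$ in $G$ (which exists, since otherwise $G$ is bipartite and trivially $C_{k-2}$-colorable). Because $G$ is $C_k$-free and $C$ has odd length $k-2$, there is essentially one $C_{k-2}$-coloring of $C$ (the cyclic identity), so we may start by fixing $c(v_i) = i$ and then attempt to propagate this partial coloring outward to all of $G$.

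Apply the four lemmas in sequence. First, Lemma \ref{colS1} either determines the colors of every vertex in $S_1$ uniquely (from $C$ together with at most $2$ other vertices), or delivers a non-extendable induced subgraph on $C$ plus at most $4$ vertices, i.e.\ at most $k+2 \leq 3k+2$ vertices. Second, Lemma \ref{colrest} either colors every component of $G-S$ of size $\geq 2$, or yields a witness of size $|C|+8 = k+6 \leq 3k+2$. Third, Lemma \ref{colS2} either fixes the colors of all vertices of $S_2$ uniquely (and reduces extendability on $S$ to extendability ignoring single-vertex components), or produces a witness of size $|C|+8 \leq 3k+2$. Finally, Lemma \ref{colS} either completes the partial coloring of $S_2$ to a $C_{k-2}$-coloring of $S$, or produces a witness of size $|C| + (2k+4) = 3k+2$. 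If none of the four lemmas fails, the constructed coloring agrees on all adjacencies because each of the four steps has been shown to resolve the only edges it is responsible for (edges inside $C$, edges between $T$ and the rest, edges induced by single-vertex components of $G-S$, and edges between consecutive $D_i$'s), hence we obtain a valid $C_{k-2}$-coloring of all of $G$.

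The main obstacle is the bookkeeping in the last step: the witness in Lemma \ref{colS} already saturates the $3k+2$ bound, so we must check that each earlier failure yields a strictly smaller witness — which is straightforward since all earlier witnesses have size $O(1) + |C| \leq k+6$. A second, subtler point is that the four partial colorings produced by the lemmas must be mutually consistent: the colors assigned to $T$-vertices in Lemma \ref{colS1} must match those used in Lemmas \ref{colrest} and \ref{colS2}, and the $D$-vertex colors forced in Lemma \ref{colS2} must not conflict with the propagation in Lemma \ref{colS}. This follows because each lemma is stated relative to the same fixed coloring of $C$, and the sets $S_1 \subseteq S_2 \subseteq S$ are nested so that colors are only further constrained as we proceed. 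Combining the cases, we conclude that either $G$ is $C_{k-2}$-colorable or there is a not $C_{k-2}$-colorable induced subgraph containing $C$ of size at most $3k+2$, as required.
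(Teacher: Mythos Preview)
Your proposal is correct and follows essentially the same approach as the paper: the paper's own proof is a two-line remark that the result follows from Lemmas~\ref{colS1}, \ref{colrest}, \ref{colS2}, \ref{colS} together with the fact that components of $G-S$ of size at least $2$ have no neighbours of type $D$. Your write-up is simply a more detailed unpacking of the same assembly, with the witness-size arithmetic and the consistency check between the nested sets $S_1\subseteq S_2\subseteq S$ made explicit; the one point the paper singles out and you leave implicit is precisely that large components of $G-S$ interact only with $T$, which is what guarantees Lemma~\ref{colrest} cannot conflict with the later $D$-coloring in Lemma~\ref{colS}.
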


\begin{proof}
This follows form Lemmas \ref{colS1}, \ref{colrest}, \ref{colS2}, \ref{colS}
and the fact that components of $G-S$ of size at least 2 do not have neighbours of
type $D$.
\end{proof}

\end{subsection}

\begin{paragraph}{Algorithm summary.}
The coloring algorithm from Theorem \ref{main2} can be easily deduced from the proof of Theorem \ref{main1}. Let us summarize its steps. 

First we check if $G$ is bipartite and if so we color $G$ by 2 subsequent vertices of $C_{k-2}$. If $G$ is not bipartite and $K$ is even, we return an odd cycle of length at most $k-1$ as a witness that $G$ is not $C_{k-2}$-colorable. Otherwise, we check if $G$ contains an odd cycle of length
at most $k-4$. If so, $G$ is not $C_{k-2}$-colorable and we return the cycle as a witness. Now, depending on whether $G$ contains a $C_k$ or is $C_k$-free, we have two sets of forbidden subgraphs. In any case, the forbidden graphs have at most $3k+28$ vertices ($3k+2$ or $k+42$ from Lemmas \ref{Ck} and \ref{Ck-2}). If $G$ contains one of them as induced subgraph, $G$ is not $C_{k-2}$-colorable and the subgraph is the witness. Otherwise, we can $C_{k-2}$-color $G$ following the proof.
\end{paragraph} 
\end{section}

\bigskip
\noindent We would like to conclude the paper with two open problems. \medskip

\noindent (1) What is the complexity of $C_{k-2}$-list-coloring of $P_k$-free graphs? 

\noindent (2) What is the complexity of $H$-coloring $P_k$-free graphs where $H$ is a cycle shorter than
$C_{k-2}$? A more general question would be to determine the complexity of $H$-coloring in other
restricted graph classes.


\begin{thebibliography}{9}

\bibitem{ramsey}
A. Atminas, V. V. Lozin, I. Razgon.
\newblock {\em Linear time algorithm for computing a small biclique in graphs without long induced paths.}
\newblock SWAT 2012, LNCS 7357, 142-152, 2012.

\bibitem{3col}
F. Bonomo, M. Chudnovsky, P. Maceli, O. Schaudt, M. Stein, M. Zhong.
\newblock {\em 3-Colouring graphs without triangles or induced paths on seven vertices.}
\newblock To appear in Combinatorica.

\bibitem{3list}
H. J. Broersma, F. V. Fomin, P. A. Golovach, D. Paulusma.
\newblock {\em Three complexity results on coloring $P_k$-free graphs.}
\newblock European J. Combin. 34, 609–619, 2013.

\bibitem{bruce}
D. Bruce, C. Ho\`ang, and J. Sawada 
\emph{A certifying algorithm for 3-colorability of $P_5$-Free Graphs.}
ISAAC 2009, LNCS 5878: 594–604, 2009

\bibitem{4critP6}
\newblock M. Chudnovsky, J. Goedgebeur, O. Schaudt, M. Zhong.
\newblock  {\em Obstructions for three-coloring graphs without induced paths on six vertices.}
Submitted.

\bibitem{certifying}
R. M. McConnell, K. Mehlhorn, S. Naher, P. Schweitzer.
\newblock {\em Certifying Algorithms.}
\newblock Computer Science Review, 5(2), 119-161, 2011.

\bibitem{diestel}
R. Diestel.
\newblock {\em Graph Theory}.
\newblock Springer-Verlag, Electronic Edition, 2005.

\bibitem{wqo}
G. Ding.
\newblock {\em Subgraphs and Well-Quasi-Ordering.}
\newblock J. Graph Theory, Volume 16, Issue 5, pages 489–502, 1992.

\bibitem{survey}
P. A. Golovach, M. Johnson, D. Paulusma, J. Song.
\newblock {\em A Survey on the Computational Complexity of Colouring Graphs with Forbidden Subgraphs.}
\newblock J. Graph Theory, 84(4), 331–363, 2017.

\bibitem{listP6}
P. A. Golovach, D. Paulusma, J. Song.
\newblock {\em Closing Complexity Gaps for Coloring Problems on $H$-Free Graphs.}
\newblock Inform. Comput.,  237, 204–214, 2014.

\bibitem{GPS}
P. Golovach, D. Paulusma, J. Song.
\emph{Coloring graphs without short cycles and long induced paths.}
Discrete Appl. Math. 167, 107-120, 2014.

\bibitem{H-coloring}
P. Hell and J.  Nesetril.
\newblock {\em On the Complexity of $H$-Coloring.}
\newblock J. Combin. Theory Ser. B 48, 92-110, 1990.

\bibitem{c4}
P. Hell and S. Huang.
\newblock {\em Complexity of coloring graphs without paths and cycles.}
\newblock LATIN 2014, LNCS 8392, 538–549, 2013.

\bibitem{3edgecoloring}
\newblock I. Holyer.
\newblock {\em The NP-completeness of edge coloring.}
\newblock SIAM J. Comput. 10(4), 718–720, 1981.

\bibitem{polyP5}
C. T. Ho\`ang, M. Kamiński, V. Lozin, J. Sawada, X. Shu.
\newblock {\em Deciding $k$-colorability of $P_5$-free graphs in polynomial time.}
\newblock Algorithmica, 57 (1), 74-81, 2010.

\bibitem{p5infCert}
C. T. Hoang, B. Moore, D. Recoskie, J. Sawada, M. Vatshelle.
\newblock {\em Constructions of $k$-critical $P_5$-free graphs.}
Discrete Appl. Math. 182, 91-98, 2015.

\bibitem{pathNP}
\newblock S. Huang.
\newblock {\em Improved Complexity Results on $k$-Coloring $P_t$-Free Graphs.}
European J. Combin. 51, 336-346, 2013.

\bibitem{cycleNP}
\newblock M. Kamiński, V. Lozin.
\newblock {\em Coloring edges and vertices of graphs without short or long cycles.}
\newblock Contributions to Discrete Mathematics. ISSN: 1715-0868, 2007.

\bibitem{certif}
\newblock D. Kratsch, R. M. McConnell, K. Mehlhorn, and J. P. Spinrad. 
\newblock {\em Certifying algorithms for recognizing interval graphs and permutation graphs.}
\newblock SIAM J. Comput., 36(2), 326-353, 2006.

\bibitem{planar-minor}
\newblock N. Robertson and P. D. Seymour.
\newblock {\em Graph minors. V. Excluding a planar graph.}
\newblock J. Combinatorial Theory, Ser. B, 41 (1986), 92–114, 1986.

\bibitem{bounded-twd-Hcol}
\newblock J.A. Telle and A. Proskurowski. 
\newblock \emph{Algorithms for vertex partitioning problems on partial $k$-trees}
\newblock SIAM J. on Discrete Math. 10(4), 529-550, 1997.

\bibitem{longpaths}
G. J. Woeginger and J. Sgall.
\newblock {\em The complexity of coloring graphs without long induced paths.}
\newblock Acta Cybernet. 15, 107–117, 2001.


\end{thebibliography}
\end{document}